\documentclass{amsart}

\usepackage{hyperref}
\usepackage{tikz}
\usepackage[mathcal]{euscript}
\usepackage{amsmath,amssymb,xy,mathrsfs,amsthm,amsxtra,graphicx,verbatim,upref}

\hypersetup{nesting=true,debug=true,naturalnames=true}
\hypersetup{colorlinks=true,urlcolor=magenta,citecolor=magenta,linkcolor=blue}

\newcommand{\Z}{\mathbf{Z}}
\newcommand{\F}{\mathbf{F}}
\newcommand{\Q}{\mathbf{Q}}
\newcommand{\T}{\operatorname{\mathsf{T}}}
\newcommand{\tors}{\mathsf{tors}}

\DeclareMathOperator{\height}{\rm ht}
\DeclareMathOperator{\typ}{{typ}}
\DeclareMathOperator{\I}{\mathtt{I}}
\DeclareMathOperator{\II}{\mathtt{II}}
\DeclareMathOperator{\III}{\mathtt{III}}
\DeclareMathOperator{\IV}{\mathtt{IV}}

\numberwithin{equation}{section}

\theoremstyle{plain}
\newtheorem{thm}[equation]{Theorem}
\newtheorem{lem}[equation]{Lemma}
\newtheorem{defn}[equation]{Definition}
\newtheorem{cor}[equation]{Corollary}
\newtheorem{prop}[equation]{Proposition}
\theoremstyle{remark}
\newtheorem{rmk}[equation]{Remark}

\newtheorem{exm}[equation]{Example}

 \xyoption{all}

\begin{document}

\title[Kodaira-Neron Statistics]{Kodaira-Neron statistics for rational elliptic curves with $j$-invariant 0 and 1728}

\author{John Cullinan}
\address{Department of Mathematics, Bard College, Annandale-On-Hudson, NY 12504, USA}
\email{cullinan@bard.edu}
\urladdr{\url{http://faculty.bard.edu/cullinan/}}

\author{Sebastian Sargenti}
\address{Department of Mathematics, Bard College, Annandale-On-Hudson, NY 12504, USA}
\email{ss5119@bard.edu}

\begin{abstract}
Elliptic curves over $\Q$ with $j$-invariant 0 or 1728 have additive reduction at all primes of bad reduction. In addition, all elliptic curves with $j$-invariant 0 have bad reduction at $p=3$ and all elliptic curves with $j$-invariant 1728 have bad reduction at $p=2$.   In this paper we count elliptic curves with $j$-invariant 0 and 1728 by height and determine asymptotics for the various Kodaira-N\'eron types at 3 and 2, respectively.  We also give related statistics by holding the torsion subgoup and isogeny-torsion graph constant. 
\end{abstract}

\keywords{Kodaira-N\'eron, elliptic curve, arithmetic}

\subjclass{11G05, 11G07}

\maketitle

\section{Introduction}

\subsection{Motivation} Let $E$ be an elliptic curve over $\Q$.  If $p$ is a prime of bad reduction for $E$, then the nonsingular points of the reduction form an abelian group isomorphic to either $\F_p^\times$ or $\F_p$; we say the reduction type is multiplicative or additive, respectively.   The Kodaira-N\'eron type $\typ_p(E)$ of $E$ at $p$ gives a finer measure of the bad reduction by incorporating the size of the component group, as well as whether the reduction is potentially good or potentially multiplicative.  

Kodaira-N\'eron types have been studied for certain families of rational elliptic curves.  For example, fix a nontrivial torsion group $T$ and write $E_T$ for a parameterized family of minimal integral models of elliptic curves over $\Q$ with torsion subgroup $T$.  In \cite{br} the authors determined, for each prime $p$ of bad reduction, and for each $T/\Q$, the Kodaira-N\'eron type at $p$ as well as the local conductors and Tamagawa numbers for all curves in the $E_T$ family.  As an illustration of their results, we present the following example.

\begin{exm}
Let $T = \Z/5\Z$.  By \cite[Prop. 2.4, Lem. 2.5]{br} every minimal elliptic curve over $\Q$ with a 5-torsion point is a specialization of the model
\[
E(a,b): \ y^2 + (a-b)xy - a^2by = x^3 - abx^2,
\]
where $a$ and $b$ are coprime integers with $a$ positive.  If $E(a,b)$ has multiplicative reduction at $p$, then $\typ_p(E(a,b)) = \I_n$ with $n$ given explicitly in \cite[Thm.~2]{br_reps}.  By \cite[Thm.~3]{frey}, if $E$ has additive reduction at $p$, then $p=5$ (the converse is not necessarily true).   If $E(a,b)$ is additive at 5 then $v_5(a+18b) >0$ and by \cite[Table 19]{br} we have
\[
\typ_5(E(a,b)) = \begin{cases} \II & \text{ if $v_5(a+18b) =1$} \\ \III & \text{ if $v_5(a+18b) >1.$} \end{cases}
\]
\end{exm}

The main question that we  address in this paper is \emph{how often} each Kodaira-N\'eron type occurs within specified families of elliptic curves.  In order to make interesting comparisons, we choose families such that every member has a common prime of bad reduction.  In this paper we study elliptic curves over $\Q$ with $j$-invariant 0 and $j$-invariant 1728, which always have additive reduction at 3 and 2, respectively, and determine the exact proportion with which each Kodaira-N\'eron type occurs.   

\subsection{Setup and Notation} Every elliptic curve $E/\Q$ can be expressed in short Weierstrass form 
\[
E:~y^2 = x^3 + Ax+B,
\]
such that $A,B \in \Z$ with $4A^3+27B^2 \ne 0$. The $j$-invariant $j(E)$ of $E$ is given by 
\[
j(E) = 1728 \cdot \frac{4A^3}{4A^3 + 27B^2}
\]
and so $j(E) = 1728$ if $B =0$ and $j(E) = 0$ if $A=0$.   Recall that the \emph{height} $\height$ of $E$ is given by
\[
\height (E) = \max \lbrace 4|A|^3, 27B^2 \rbrace.
\]
By ordering all elliptic curves with specified $j$-invariant by height we can study the overall proportion with which each value of $\typ_3(E)$ and $\typ_2(E)$ occur for $j$-invariants 0 and 1728, respectively.  In the case of $j$-invariant 1728 we can further refine the statistics by controlling for isogeny-torsion graph, while if $j(E) = 0$ we can control for torsion subgroup.  

\begin{defn}
We write $N^0_{\T}(X)$ for the number of minimal elliptic curves of height $\leq X$ with $j$-invariant $0$ and Kodaira-N\'eron type $\T$ at 3.  Similarly, $N^{1728}_{\T}(X)$ denotes the number of minimal elliptic curves of height $\leq X$ with $j$-invariant $1728$ and Kodaira-N\'eron type $\T$ at 2.
\end{defn}

If $j(E) = 1728$, then $E$ belongs to an isogeny-torsion graph of isomorphism type $L_2(2)$ or $T_4^3$ (we will review this material below).  We can then refine $N_{\T}^{1728}(X)$ by holding the isogeny-torsion graph constant.

\begin{defn}
Let $G \in \lbrace L_2(2), T_4^3 \rbrace$. Then $N_{G,\T}^{1728}(X)$ denotes the number of minimal elliptic curves $E$ such that $j(E) = 1728$, $\typ_2(E) = \T$, $\height(E) \leq X$, and that belong to an isogeny-torsion graph isomorphic to $G$. 
\end{defn}

If $j(E) = 0$, then $E(\Q)_{\tors} \in \lbrace \lbrace \infty \rbrace, \Z/2\Z, \Z/3\Z, \Z/6\Z \rbrace$ (up to isomorphism, there is a unique elliptic curve with $j(E) = 0$ and $E(\Q)_\tors \simeq \Z/6\Z$).  In a similar fashion to the case of $j$-invariant 1728, we can refine $N_{\T}^{0}(X)$ by holding the torsion subgroup constant.

\begin{defn}
Let $G \in  \lbrace \lbrace \infty \rbrace, \Z/2\Z, \Z/3\Z \rbrace$. Then $N_{G,\T}^{0}(X)$ denotes the number of minimal elliptic curves $E$ such that $j(E) = 0$, $\typ_3(E) = \T$, $\height(E) \leq X$, and $E(\Q)_{\tors}  = G$.
\end{defn}

\subsection{Main Results} If $j(E) = 1728$, then $E$ belongs to one of four isogeny-torsion graphs: $T_4^1$, $T_4^2$, $T_4^3$, and $L_2(2)$.  Up to isomorphism, the graphs $T_4^1$ and $T_4^2$ only occur for the isogeny classes \href{https://beta.lmfdb.org/EllipticCurve/Q/32/a/}{32.a} and \href{https://beta.lmfdb.org/EllipticCurve/Q/64/a/}{64.a}, respectively, while $T_4^3$ and $L_2(2)$ occur infinitely often.  The graphs of type $T_4^3$ occur with different asymptotics than those of type $L_2(2)$, so we distinguish between them in the following theorem.

\begin{thm} \label{j1728_thm}
Let $E$ be an elliptic curve over $\Q$ with $j$-invariant 1728.  Let $\T = \typ_2(E)$.  
\begin{enumerate}
\item If $E$ belongs to an isogeny-torsion graph isomorphic to $T_4^3$, then \\
$\T \in \lbrace \II, \III, \I_2^*, \I_3^* \rbrace$ and 
\[
N_{T_4^3,\T}^{1728}(X) = \frac{c_{\T}}{\zeta(2)\sqrt[3]{2}}X^{1/6} + O(X^{1/12}),
\]
where
\[
c_{\T} = \begin{cases} 2/3 & \text{ if } \T =   \II \text{ or }\III, \\
1/3 & \text{ if } \T = \I_2^* \text{ or } \I_3^*. \end{cases}
\]
\item If $E$ belongs to an isogeny-torsion graph isomorphic to $L_2(2)$, then \\
$\T \in \lbrace \II, \III, \III^*, \I_2^*, \I_3^* \rbrace$ and 
and 
\begin{align*}
N_{L_2(2),\T}^{1728}(X) =  \frac{c_{\T}}{\zeta(4)\sqrt[3]{4}}X^{1/3} + O(X^{1/6}),
\end{align*}
where
\begin{align} \label{c_T_first_time}
c_{\T} = \begin{cases} 8/15 & \text{ if } \T = \III, \\
4/15 & \text{ if } \T = \II, \\
1/15 & \text{ if } \T = \I_2^*,  \I_3^*, \text{ or } \III^*. \end{cases}
\end{align}
\end{enumerate}
\end{thm}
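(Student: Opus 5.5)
The plan is to parametrize all minimal models explicitly, attach the Kodaira--N\'eron type at 2 to congruence conditions on the parameter, and then count integers satisfying power-free and congruence conditions in a growing interval. Every curve with $j(E)=1728$ has a model $E_A : y^2 = x^3 + Ax$, and this model is minimal exactly when $A$ is fourth-power-free. I will work with this as the minimality criterion; at $p=2$ I would double-check it against Tate's algorithm, since a change of variables involving $r,s,t$ might occasionally reduce the discriminant. Since $\height(E_A) = 4|A|^3$, the condition $\height \le X$ reads $|A| \le (X/4)^{1/3}$.

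The first step is to identify the isogeny-torsion graph. $E_A$ has a rational 2-isogeny $E_A \to E_{-4A}$ whose kernel is $(0,0)$. The graph is of type $T_4^3$ precisely when some curve in the class has full rational 2-torsion, that is, when $-A$ or $4A = -(-4A)$ is a square. Otherwise the graph is $L_2(2)$, with the two exceptional classes 32.a and 64.a set aside as contributing $O(1)$.

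For the $T_4^3$ case this gives $A=-d^2$ with $d$ squarefree, or $A = 4d^2$ with $d$ odd squarefree. Both families have $|d| \ll X^{1/6}$, so the main term has order $X^{1/6}/(\zeta(2)\sqrt[3]{2})$ after inserting the local density at 2. The exact constant will come out of this bookkeeping, and I would check it against the sum of the $c_\T$. For the $L_2(2)$ case, $A$ runs over fourth-power-free integers of both signs with $|A|\le (X/4)^{1/3}$, minus the thin $T_4^3$ set of size $O(X^{1/6})$. This gives the $X^{1/3}/(\zeta(4)\sqrt[3]{4})$ shape.

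The second step, and the one I expect to be the main obstacle, is to run Tate's algorithm at $p=2$ for $y^2=x^3+Ax$ uniformly in $A$. Write $A = 2^k u$ with $u$ odd and $0\le k\le 3$. I will show that $\typ_2(E_A)$ depends only on $k$ and on $u$ modulo a small power of 2, probably $8$ or $16$. The work is to find the correct translations $x\mapsto x+r$ and $y \mapsto y+sx+t$ at each stage of the algorithm, and to track how the valuations of $a_1,\dots,a_6$ behave in each residue class. This will produce a table: roughly, $k=0$ yields $\II$ or $\III$ according to $u \bmod 4$, $k=1$ yields $\III$, $k=2$ yields $\I_2^*$ or $\I_3^*$ according to $u$ modulo $4$ or $8$, and $k=3$ yields $\III^*$. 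In the $T_4^3$ families the constraint that $\pm A$ or $\pm A/4$ be a square forces $k \in \{0,2\}$ and fixes $u$ modulo $8$. This restriction explains why only $\II,\III,\I_2^*,\I_3^*$ appear in part (1), and why $\III^*$ is absent there.

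The third step is the counting. For squarefree $d$ in a fixed residue class modulo $2^m$, the standard M\"obius argument gives $\#\{|d|\le Y\} = \delta Y + O(Y^{1/2})$, where $\delta$ is an explicit multiple of $6/\pi^2$ computed from the 2-adic density. With $Y \asymp X^{1/6}$ this yields the $O(X^{1/12})$ error. Likewise, for fourth-power-free $A = 2^k u$ with $u$ in a fixed class, $\#\{|A|\le Y\} = \delta' Y/\zeta(4) + O(Y^{1/4})$. With $Y\asymp X^{1/3}$ the error is $O(X^{1/12})$, which is absorbed in $O(X^{1/6})$ together with the removed $T_4^3$ set.

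The local densities follow directly. The class $k$ carries 2-adic weight $2^{-k}$ among fourth-power-free integers, normalized by $(1-2^{-4})^{-1}=16/15$; this is the source of the denominators $15$. Each class is then split equally among the residues of $u$ coming from the table. Summing these densities reproduces $c_\T$: $\III$ collects $k=0$ (one half) and $k=1$, giving $8/15$; $\II$ gives $4/15$; and each of $\I_2^*$, $\I_3^*$, $\III^*$ gives $1/15$.
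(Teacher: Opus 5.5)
\textbf{Overall.} Your strategy matches the paper's: parametrize minimal models by fourth-power-free $A$, turn $\typ_2$ into congruence conditions on $A$ via Tate's algorithm, then sieve. Your $2$-adic densities $8/15, 4/15, 1/15$ for the $L_2(2)$ case also agree with the paper's mod-$16$ table. However, part (1) contains an error and is left unfinished, and the constant you claim for part (2) does not follow from your own count.

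\textbf{Part (1).} Since $4A$ is a square exactly when $A$ is, the $j=1728$ curves in $T_4^3$ classes are $E_{-t^2}$ and $E_{t^2}$ with $t\ge 1$ squarefree (omitting $t=1,2$, i.e.\ 32.a and 64.a). Your list ``$A=4d^2$ with $d$ odd'' keeps only the even-$t$ half of the second family. It drops $A=t^2$ with $t$ odd. Those curves have $A\equiv 1 \pmod 8$ and are precisely the ones with $\typ_2(E)=\II$. So under your parametrization $\II$ would not occur in part (1) at all, which contradicts your own later sentence.

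The constants, which are the whole content of (1), are deferred to ``bookkeeping.'' The paper's argument for them is short:
\begin{itemize}
\item Both families have height $4t^6$, so each contributes $\frac{1}{\zeta(2)}(X/4)^{1/6}+O(X^{1/12})=\frac{1}{\zeta(2)\sqrt[3]{2}}X^{1/6}+O(X^{1/12})$ curves.
\item Odd $t$ gives $\III$ (for $-t^2$) or $\II$ (for $t^2$); even $t$ gives $\I_2^*$ or $\I_3^*$ respectively.
\item Odd squarefree integers make up $2/3$ of the squarefree integers.
\end{itemize}
This yields $c_\T\in\{2/3,1/3\}$ and explains why these four constants sum to $2$.

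\textbf{Part (2).} You count $A$ of both signs, and that is correct. Over $\Q$ we have $E_A\not\cong E_{-A}$, since $-1$ is not a fourth power, and both curves have height $4|A|^3$. But then the total is $\frac{2}{\zeta(4)}(X/4)^{1/3}+O(X^{1/12})$. Your densities therefore give
$N^{1728}_{L_2(2),\T}(X)=\frac{2c_\T}{\zeta(4)\sqrt[3]{4}}X^{1/3}+O(X^{1/6})$.
That is twice the displayed main term, so your argument does not reproduce the stated formula.

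The paper reaches its constant by counting only $0<t\le (X/4)^{1/3}$, and it subtracts only squares rather than $\pm$squares. Its own Pari/GP table supports your count instead. The five entries total $1164094$ at $X=10^{18}$. Compare $\frac{2}{\zeta(4)}(X/4)^{1/3}\approx 1.164\times 10^{6}$ with $\frac{1}{\zeta(4)}(X/4)^{1/3}\approx 5.82\times 10^{5}$. Rather than asserting agreement, you should either restrict explicitly to $A>0$ or state that the main term in (2) needs an extra factor of $2$.
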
 

Because the number of graphs of type $L_2(2)$ dominate the number  of type $T_4^3$, the overall proportions of the Kodaira-N\'eron types for curves of $j$-invariant 1728 are the same as those that belong to graphs of type $L_2(2)$.  We record this in the following corollary.

\begin{cor}
Let $E/\Q$ be an elliptic curve with $j$-invariant 1728.  Then $\typ_2(E) \in \lbrace \I_2^*, \I_3^*, \II, \III, \III^* \rbrace$ and the proportion with which those types occur, among all minimal elliptic curves over $\Q$, are given by the $c_{\T}$ of (\ref{c_T_first_time}).
\end{cor}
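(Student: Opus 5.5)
The corollary follows from Theorem~\ref{j1728_thm} by comparing the orders of growth of the two families. We proceed as follows.

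First, partition the minimal curves with $j(E)=1728$ by isogeny-torsion graph. These graphs are $T_4^1$, $T_4^2$, $T_4^3$ and $L_2(2)$. The graphs $T_4^1$ and $T_4^2$ occur only for the isogeny classes 32.a and 64.a, so they contribute finitely many minimal curves, i.e.\ $O(1)$ to every count. Hence, for every type $\T$,
\[
N^{1728}_{\T}(X) = N^{1728}_{L_2(2),\T}(X) + N^{1728}_{T_4^3,\T}(X) + O(1).
\]
From this decomposition, the list of possible types is the union of the lists in parts (1) and (2) of Theorem~\ref{j1728_thm}, together with the types of the finitely many curves in 32.a and 64.a. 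These have $j=1728$, so we check their $\typ_2$ directly (e.g.\ from LMFDB or Tate's algorithm); we expect them to lie in $\lbrace \I_2^*,\I_3^*,\II,\III,\III^*\rbrace$, which gives the first claim.

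Second, insert the asymptotics of Theorem~\ref{j1728_thm}. The $T_4^3$ contribution is $O(X^{1/6})$, which is absorbed into the $L_2(2)$ error term, so
\[
N^{1728}_{\T}(X) = \frac{c_{\T}}{\zeta(4)\sqrt[3]{4}}X^{1/3} + O(X^{1/6}).
\]
Summing over the five types gives the total count $N^{1728}(X)$. Since $\sum c_{\T} = 8/15+4/15+3/15 = 1$, we get $N^{1728}(X) = \frac{1}{\zeta(4)\sqrt[3]{4}}X^{1/3}+O(X^{1/6})$. Dividing, $N^{1728}_{\T}(X)/N^{1728}(X) \to c_{\T}$ with error $O(X^{-1/6})$, as claimed.

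No step here is a real obstacle, since all the analytic work is in Theorem~\ref{j1728_thm}. The only points needing care are that the graph partition is exhaustive, that the two sporadic classes have no type outside the stated list, and that the constants $c_\T$ in (\ref{c_T_first_time}) sum to $1$, so that they are genuinely proportions.
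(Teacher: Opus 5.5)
Your proposal is correct and takes the same approach as the paper. The paper's proof observes that the $L_2(2)$ family contributes on the order of $X^{1/3}$ curves while $T_4^3$ contributes only $O(X^{1/6})$, so the limiting proportions are those of Theorem~\ref{j1728_thm}(2); you additionally spell out what the paper leaves implicit, namely the $O(1)$ contribution of the sporadic classes 32.a and 64.a (their types $\III, \I_3^*, \I_2^*, \II$ are recorded in Table~\ref{j1728_finite_tables}, so you can drop the ``we expect'') and the check that $\sum_{\T} c_{\T} = 1$.
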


If $j(E) = 0$, then each torsion subgroup $G \in \lbrace \lbrace \infty \rbrace, \Z/2\Z, \Z/3\Z \rbrace$ occurs with a different asymptotic.  We separate these results into distinct theorems for ease of reading.

\begin{thm} \label{j=0_tors=2}
Let $E$ be an elliptic curve over $\Q$ with $j$-invariant 0 and $E(\Q)_\tors \simeq \Z/2\Z$. Let $\T = \typ_3(E)$.   Then $\T \in \lbrace \III,\III^* \rbrace$ and 
\[
N_{\Z/2\Z,\T}^0(X) = c_{\T} \cdot \frac{2}{\sqrt{3}\zeta(2)} X^{1/6} + O(X^{1/12}),
\]
where 
\[
c_{\T} = \begin{cases} 3/4 & \text{ if $\T = \III$, and} \\ 1/4 & \text{ if $\T = \III^*$}. \end{cases}
\]
\end{thm}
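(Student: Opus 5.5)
The plan is to parametrize the family explicitly, read off the Kodaira--N\'eron type at $3$ from the valuation of the discriminant, and then count squarefree integers in an interval with a congruence condition at $3$.

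\emph{Parametrization.} A curve with $j(E)=0$ has a short model $y^2=x^3+B$. I take ``minimal'' to mean that no $d>1$ has $d^6\mid B$, i.e.\ $B$ is sixth-power-free; I assume this is the convention the paper uses for short models. A rational $2$-torsion point is a rational root of $x^3+B$, so $E(\Q)[2]\neq 0$ exactly when $B=b^3$ with $b\in\Z$. Then $b^3$ is sixth-power-free exactly when $b$ is squarefree. The curves with $j=0$ and $\Z/6\Z$ torsion form the single isomorphism class mentioned in the introduction, i.e.\ $B=1$, $b=1$. Removing it changes the count by $O(1)$. So $N^0_{\Z/2\Z}(X)$ is, up to $O(1)$, the number of nonzero squarefree $b$ with $\height(E)=27b^6\le X$, that is, $|b|\le X^{1/6}/\sqrt3$.

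\emph{Reduction type at $3$.} We have $\Delta=-16\cdot 27\,B^2=-432\,b^6$. If $3\nmid b$ then $v_3(\Delta)=3$. If $3\mid b$ then $v_3(b)=1$, since $b$ is squarefree, so $v_3(\Delta)=3+6=9$. In both cases $v_3(\Delta)<12$, so the model is minimal at $3$. Since $j=0$, the reduction is potentially good, so the type is one of $\II,\III,\IV,\I_0^*,\IV^*,\III^*,\II^*$. These types have $v(\Delta)=2,3,4,6,8,9,10$ respectively, by Ogg's formula or Tate's algorithm, and these values are distinct. Hence $v_3(\Delta)=3$ forces $\III$ and $v_3(\Delta)=9$ forces $\III^*$. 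This shows $\T\in\{\III,\III^*\}$. If one prefers not to rely on the table, one runs Tate's algorithm directly at $3$ for the two cases $3\nmid b$ and $3\,\|\,b$; this is a short check.

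\emph{Counting.} Let $Y=X^{1/6}/\sqrt3$. The standard estimate for squarefree integers in a residue condition gives
\[
\#\{1\le b\le Y:\ b \text{ squarefree},\ 3\nmid b\}=\tfrac{1}{\zeta(2)}\cdot\tfrac34\, Y+O(\sqrt Y),
\]
and the count with $3\mid b$ equals the count of squarefree $b'\le Y/3$ with $3\nmid b'$, namely $\frac1{\zeta(2)}\cdot\frac14 Y+O(\sqrt Y)$. Both follow from M\"obius inversion over $d^2\mid b$, with the local factor at $3$ equal to $(1-1/3)/(1-1/9)=3/4$. Doubling for the sign of $b$ gives the main term $c_\T\cdot\frac{2}{\sqrt3\,\zeta(2)}X^{1/6}$ with $c_{\III}=3/4$ and $c_{\III^*}=1/4$. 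The error is $O(\sqrt Y)=O(X^{1/12})$.

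The main point needing care is the minimality convention. If the paper's notion of minimality for short models at $2$ or $3$ is subtler than ``sixth-power-free'' (for example, allowing non-short minimal models at $2$), I would verify that it does not alter the parametrization by squarefree $b$ and that the height condition stays $27b^6\le X$. The rest is routine.
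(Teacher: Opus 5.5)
\textbf{There is a gap in how you determine the reduction type.} Your parametrization ($B=b^3$ with $b$ squarefree, $b\neq 1$, height $27b^6$) and your count are correct and coincide with the paper's proof. The count uses density $\frac{3}{4\zeta(2)}$ for squarefree $b$ prime to $3$, handles $3\mid b$ via $b=3b'$, doubles for sign, and has error $O(X^{1/12})$. Your worry about minimality at $2$ is harmless: the paper's extra condition concerns only $v_2(B)=4$, and here $v_2(b^3)\in\{0,3\}$.

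The problem is the step determining $\typ_3(E)$. The list $v(\Delta)=2,3,4,6,8,9,10$ for $\II,\III,\IV,\I_0^*,\IV^*,\III^*,\II^*$ holds only for tame reduction, e.g.\ in residue characteristic $\ge 5$. At $p=3$, Ogg's formula reads $v_3(\Delta)=f_3+m-1$. Because of wild ramification the conductor exponent $f_3$ can be as large as $5$, so $v_3(\Delta)$ does not determine the type. This fails already inside the family $y^2=x^3+B$:
\begin{itemize}
\item For $B=2$, $v_3(\Delta)=3$. After $x\mapsto x+1$ one gets $a_6=3$, and Tate's algorithm stops at Step 3 with type $\II$.
\item For $B=54$, $v_3(\Delta)=9$. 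After $x\mapsto x+3$, Step 8 gives the polynomial $Y^2-1$ with distinct roots, so the type is $\IV^*$.
\end{itemize}
So ``$v_3(\Delta)=3$ forces $\III$'' and ``$v_3(\Delta)=9$ forces $\III^*$'' are false as stated.

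What actually separates $\III$ from $\II$, and $\III^*$ from $\IV^*$, is a congruence modulo $9$. Running Tate's algorithm on $y^2=x^3+B$, as in the proof of Theorem~\ref{0_min_thm}, gives:
\begin{itemize}
\item type $\III$ for a $3$-unit $B$ if and only if $B\equiv\pm1\pmod 9$;
\item type $\III^*$ for $v_3(B)=3$ if and only if $B/27\equiv\pm1\pmod 9$.
\end{itemize}
The missing observation is that the cube of any integer prime to $3$ is $\equiv\pm1\pmod 9$. Hence $B=b^3$ with $3\nmid b$ falls into the $\III$ case, and $B/27=(b/3)^3$ with $3\,\|\,b$ falls into the $\III^*$ case. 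The paper takes this from Table~\ref{j0_Z2_torsion}, i.e.\ from \cite{br}.

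Your fallback ``run Tate's algorithm directly'' would repair the argument, but only once it is carried out and this mod-$9$ fact is used. The discriminant-valuation argument you actually give establishes neither $\T\in\{\III,\III^*\}$ nor which type occurs for which $b$.
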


\begin{thm} \label{j=0_tors=3}
Let $E$ be an elliptic curve over $\Q$ with $j$-invariant 0 and $E(\Q)_\tors \simeq \Z/3\Z$. Let $\T = \typ_3(E)$.   Then $\T \in \lbrace \II, \III, \IV, \IV^* \rbrace$ and 
\[
N_{\Z/3\Z,\T}^0(X) = c_{\T} \cdot \left( \frac{2\cdot3^{1/4}}{7\zeta(3)} \right) X^{1/4} + O(X^{1/12}),
\]
where 
\[
c_{\T} = 
\begin{cases} 
6/13 & \text{ if $\T = \II$}, \\
3/13 & \text{ if $\T = \III$ or $\IV$, and}  \\
1/13 & \text{ if $\T = \IV^*$.}
\end{cases}
\]
\end{thm}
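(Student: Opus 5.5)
The plan is to parametrize the family explicitly, reduce the count to counting cube-free integers in residue classes, and read off the Kodaira–N\'eron type at 3 from Tate's algorithm.

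\emph{Parametrization.} A curve with $j(E)=0$ has a short model $y^2=x^3+B$, and minimality forces $B$ to be sixth-power free. Such a curve has a rational point of order 3 exactly when $B$ is a square or $B=-432c^6$. The second case gives a single curve up to isomorphism, and it contributes $O(1)$. We also exclude $B=1$, the unique curve with torsion $\Z/6\Z$. We must further exclude the square values of $B$ that produce a rational 2-torsion point, i.e.\ $B$ a cube; for $B=b^2$ with $b$ cube-free this leaves only $b=\pm1$. So, up to finitely many curves, the family is $E_b\colon y^2=x^3+b^2$ with $b>0$, since $\pm b$ give the same curve. Minimality at odd primes $p\neq 3$ says $p^3\nmid b$. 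At $p=2$ and $p=3$ we check by Tate's algorithm which $b$ give models that are minimal among short Weierstrass models. I expect this step to produce the extra rational factor in the constant (the $7$ in $2\cdot 3^{1/4}/7$), presumably a 2-adic local density. The height is $27b^4\le X$, so $b\le (X/27)^{1/4}$.

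\emph{Counting.} The count of cube-free $b\le Y$ lying in a fixed residue class modulo $9$ (and modulo a power of $2$), with a prescribed $3$-adic valuation, is
\[
\delta\,\frac{Y}{\zeta(3)}+O(Y^{1/3}),
\]
where $\delta$ is an explicit local density. This follows from M\"obius inversion over cubes, $\sum_{d^3\mid b}\mu(d)$, with the usual splitting at $d\le Y^{1/3}$. Taking $Y=(X/27)^{1/4}$ gives the main term $X^{1/4}$ and the error term $O(X^{1/12})$ claimed in Theorem~\ref{j=0_tors=3}.

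\emph{Local analysis at $3$.} Among cube-free integers, the $3$-adic valuation $v_3(b)\in\{0,1,2\}$ occurs with relative weights $\tfrac23,\tfrac29,\tfrac2{27}$. After normalizing, these are $\tfrac{9}{13},\tfrac{3}{13},\tfrac1{13}$. Running Tate's algorithm on $y^2=x^3+b^2$ should give:
\begin{itemize}
\item if $v_3(b)=0$ and $b\not\equiv\pm1\pmod 9$, then $\T=\II$;
\item if $b\equiv\pm1\pmod 9$, then $\T=\III$; these are $2$ of the $6$ unit classes modulo $9$, so they carry $\tfrac13\cdot\tfrac9{13}=\tfrac3{13}$, leaving $\tfrac6{13}$ for $\II$;
\item if $v_3(b)=1$, so $v_3(B)=2$, then $\T=\IV$, with density $\tfrac3{13}$;
\item if $v_3(b)=2$, so $v_3(B)=4$, then $\T=\IV^*$, with density $\tfrac1{13}$.
\end{itemize}
These values reproduce the $c_{\T}$ in the statement. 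The $3$-adic and $2$-adic conditions are independent by CRT, so the constants factor as $c_{\T}$ times a common global constant.

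\emph{Main obstacle.} The delicate part is carrying out Tate's algorithm at $3$ for $v_3(b)=0$. There one must change coordinates, for instance $x\mapsto x-b^{2/3}$ modulo $9$, to separate $\II$ from $\III$ using the congruence $b^2\equiv1\pmod 9$. The other delicate point is pinning down the exact 2-adic minimality density that yields the global constant $2\cdot3^{1/4}/(7\zeta(3))$. I would verify both steps against LMFDB examples.
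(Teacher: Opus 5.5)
\textbf{Gap at $p=2$.} Your 3-adic analysis and counting framework agree with the paper's. The four types are cut out by $v_3(b)\in\{0,1,2\}$ together with $b\equiv\pm1\pmod 9$, and the local weights $\tfrac{9}{13}\cdot\tfrac23,\ \tfrac{9}{13}\cdot\tfrac13,\ \tfrac{3}{13},\ \tfrac{1}{13}$ give exactly the $c_{\T}$. One small repair in the Tate step: $x\mapsto x-b^{2/3}$ is not an integral change of variables. Use $x\mapsto x-B$ instead, which is valid since $x^3+B\equiv(x+B)^3\pmod 3$. It gives $a_6=B-B^3$ and $b_8=3B^2(B^2-4)$, so $\II$ versus $\III$ is decided by whether $b^2\equiv 1\pmod 9$.

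The genuine gap is at $p=2$, which is exactly where the global constant comes from, and you leave it as a guess. Moreover, the criterion you state, minimality ``among short Weierstrass models'', does not produce it if taken literally. For $j=0$ a short-form rescaling requires $2^6\mid B=b^2$, so this criterion imposes nothing at $2$ beyond $2^3\nmid b$. Carried out, it yields the count $\zeta(3)^{-1}(X/27)^{1/4}$, not the stated constant.

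The paper's notion is minimality of the Weierstrass equation among all integral models, as in Theorem~\ref{0_min_thm}. The missing observation is this: if $v_2(b)=2$, then $B/16=(b/4)^2$ is an odd square, hence $\equiv 1\pmod 4$. So the minimal case $v_2(B)=4$, $B/16\equiv 3\pmod 4$ of Theorem~\ref{0_min_thm} never occurs for squares. Concretely, $x=4X$, $y=8Y+4$ gives the integral model $Y^2+Y=X^3+(b^2-16)/64$, whose discriminant is smaller by $2^{12}$, so the short model is not minimal.

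Hence $y^2=x^3+b^2$ is minimal if and only if $b$ is cube-free with $v_2(b)\in\{0,1\}$. The 2-adic factor is $(\tfrac12+\tfrac14)/(1-\tfrac18)=\tfrac67$, and the total count is $\frac{6}{7\zeta(3)}(X/27)^{1/4}+O(X^{1/12})=\frac{2\cdot 3^{1/4}}{7\zeta(3)}X^{1/4}+O(X^{1/12})$, which is the paper's argument. Once this is inserted, your CRT independence remark finishes the proof.
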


\begin{thm}  \label{j=0_tors=0}
Let $E$ be an elliptic curve over $\Q$ with $j$-invariant 0 and $E(\Q)_\tors \simeq \lbrace \infty \rbrace$. Let $\T = \typ_3(E)$.   Then $\T \in \lbrace \II, \III, \IV, \II^*,\III^*,\IV^* \rbrace$, and
\[
N_{\lbrace \infty \rbrace ,\T}^0(X) = c_{\T} \cdot \frac{62}{63\sqrt{27}\zeta(6)} X^{1/2} + O(X^{1/4}),
\]
where 
\begin{align} \label{c_T_second_time}
c_{\T} = 
\begin{cases} 
243/364 & \text{ if $\T = \II$} \\
81/364& \text{ if $\T = \III$} \\
27/364& \text{ if $\T = \IV$} \\
9/364& \text{ if $\T = \IV^*$} \\
3/364& \text{ if $\T = \III^*$, and} \\
1/364& \text{ if $\T = \II^*$}.
\end{cases}
\end{align}
\end{thm}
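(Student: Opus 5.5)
The plan is to parametrize the family explicitly and reduce the theorem to counting sixth-power-free integers in residue classes. Every curve with $j(E)=0$ has a short model $E_B: y^2=x^3+B$ with $B\in\Z\setminus\{0\}$. Its height is $\height(E_B)=27B^2$, so $\height(E_B)\le X$ becomes $|B|\le Y:=\sqrt{X/27}$. Two models $E_B$ and $E_{B'}$ are isomorphic over $\Q$ exactly when $B'=u^6B$. So isomorphism classes correspond to sixth-power-free $B$, except for the correction at $2$ and $3$ described next.

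First I will pin down which $B$ give the curves we count. The torsion is trivial unless $B$ is a square (a point of order $2$ comes from $x^3+B$ having a rational root, i.e.\ $B$ a cube, giving $\Z/2\Z$), or $B$ is a square or $B=-432$ (giving $\Z/3\Z$). Both exceptional sets have $O(Y^{1/2})=O(X^{1/4})$ elements up to $Y$, so they fall into the error term. Next comes minimality at $2$ and $3$. The short model $E_B$ can fail to be globally minimal at $2$ when $v_2(B)=4$ and $B/16\equiv 1\pmod 4$: the substitution $x=4x'$, $y=8y'+4$ gives $y'^2+y'=x'^3+(B/16-1)/4$. I expect this is what produces the factor $62/63$ relative to the naive density $64/63\cdot\zeta(6)^{-1}$. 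To settle it I will run the standard minimality criterion ($v_p(c_4)<4$ or $v_p(\Delta)<12$, with the extra conditions at $2,3$) on $c_4=0$ and $c_6=-864B$, $\Delta=-2^4 3^3 B^2$. This tells me exactly which residue classes of $B$ modulo powers of $2$ and $3$ to include or exclude, so that each minimal curve is counted exactly once. I will do the analogous check at $3$, where $v_3(B)\in\{3,4,5\}$ together with congruence conditions on $B/3^k$ may allow a further change of variables.

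Second, I will determine $\typ_3(E_B)$ by Tate's algorithm at $p=3$. Since $\Delta=-2^4 3^3B^2$, the value of $v_3(\Delta)=3+2v_3(B)$ already narrows the options. For $v_3(B)=0,1,2,3,4,5$ the candidates are respectively $\II$ or $\III$, then $\IV$, $\IV^*$, and so on up to $\II^*$. Which candidate occurs is decided by congruences of $B/3^{v_3(B)}$ modulo $9$ (for instance $B\equiv\pm1\pmod 9$ versus not, which governs whether a translation $x\mapsto x\mp 1$ raises the valuation). I will tabulate, for each type $\T$, the set of $3$-adic classes of $B$, and compute its $3$-adic density inside the sixth-power-free integers. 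The target ratios $3^5:3^4:\cdots:1$ out of $364=\sum_{k=0}^{5}3^k$ indicate that each successive type loses a factor of $3$ in density. This is exactly the $v_3$-distribution of sixth-power-free integers, $(1-1/3)3^{-k}/(1-3^{-6})$, up to the congruence splitting within $v_3(B)=0$ and any reassignment coming from non-minimal classes. I will verify that the bookkeeping produces the stated $c_{\T}$.

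Third, the counting. For a modulus $m=2^a3^b$ and a residue class $r$, the number of sixth-power-free $B$ with $|B|\le Y$ and $B\equiv r\pmod m$ is $2\delta(r,m)Y/\zeta(6)+O(Y^{1/6})$, by Möbius inversion over $d^6\mid B$. Here $\delta$ is the product of the local densities at $2$ and $3$. Summing over the classes for $\T$, subtracting the $O(Y^{1/2})$ torsion exceptions, and substituting $Y=X^{1/2}/\sqrt{27}$ gives the main term $c_{\T}\cdot\frac{62}{63\sqrt{27}\zeta(6)}X^{1/2}$ with error $O(X^{1/4})$. The main obstacle is the second step combined with the minimality correction: a careful run of Tate's algorithm at $3$ for each valuation and residue class, and making sure that the non-minimal classes at $2$ and $3$ are handled consistently, so that the densities come out exactly to $62/63$ and to the listed $c_{\T}$.
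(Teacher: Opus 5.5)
Your route is the paper's route. You write $E_B\colon y^2=x^3+B$, push the squares and cubes into the error term, and sieve sixth-power-free $B$ with the Euler factor at $2$ changed from $63/64$ to $62/64$ (dropping $v_2(B)=4$, $B/16\equiv 1\pmod 4$). You then read the $3$-adic densities off Tate's algorithm.

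\textbf{Your last step does not give the stated constant.} You count all $B$ with $|B|\le Y$. Your own formula $2\delta(r,m)Y/\zeta(6)$ then sums to $2c_{\T}\cdot\frac{62}{63\sqrt{27}\zeta(6)}X^{1/2}$, which is twice the main term in the statement. The stated constant is the count over $1\le B\le\sqrt{X/27}$ only. The paper's proof takes the number of $t$ with $27t^2\le X$ to be $\sim(X/27)^{1/2}$, and its numerical check runs over $1\le t\le 192450$. The factor $2$ cannot be absorbed, for two reasons:
\begin{itemize}
\item $E_B\not\simeq E_{-B}$ over $\Q$, because $-1$ is not a sixth power.
\item The $2$- and $3$-adic conditions have the same densities for $B$ and $-B$.
\end{itemize}
So you must either state the convention $B>0$ explicitly, or you will prove a main term twice the stated one. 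The paper's $\Z/2\Z$ case does include the factor $2$ for signs, so this convention is implicit and specific to this theorem.

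\textbf{Your valuation-to-type correspondence is wrong.} Tate's algorithm at $3$ gives:
\begin{itemize}
\item $\II$ iff $v_3(B)=1$, or $v_3(B)=0$ and $B\not\equiv\pm1\pmod 9$;
\item $\III$ iff $v_3(B)=0$ and $B\equiv\pm1\pmod 9$;
\item $\IV$ iff $v_3(B)=2$;
\item $\IV^*$ iff $v_3(B)=4$, or $v_3(B)=3$ and $B/27\not\equiv\pm1\pmod 9$;
\item $\III^*$ iff $v_3(B)=3$ and $B/27\equiv\pm1\pmod 9$;
\item $\II^*$ iff $v_3(B)=5$.
\end{itemize}
In particular $v_3(B)=1$ gives $\II$, not $\IV$, and $v_3(B)=2$ gives $\IV$, not $\IV^*$.

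Among $B$ with $v_3(B)\le 5$, the class $v_3(B)=k$ has relative density $3^{5-k}/364$. This yields:
\begin{itemize}
\item $c_{\II}=(162+81)/364$;
\item $c_{\III}=81/364$;
\item $c_{\IV}=27/364$;
\item $c_{\IV^*}=(6+3)/364$;
\item $c_{\III^*}=3/364$;
\item $c_{\II^*}=1/364$.
\end{itemize}
The geometric pattern comes from this reshuffling, not from a one-type-per-valuation match.

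\textbf{Two smaller points.}
\begin{itemize}
\item There is no further minimality correction at $3$. Every $B$ with $v_3(B)\le 5$ is minimal at $3$; the algorithm ends at Step 10 when $v_3(B)=5$. This is why the $3$-adic densities are just those of sixth-power-free integers.
\item The factor $62/63$ has nothing to do with counting each curve once. The curves with $v_2(B)=4$ and $B/16\equiv1\pmod4$ are genuine $j=0$ curves. They are excluded only because the paper counts curves whose short model $y^2=x^3+B$ is globally minimal.
\end{itemize}
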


\begin{cor}
Let $E/\Q$ be an elliptic curve with $j$-invariant 0.  Then $\typ_2(E) \in \lbrace \II, \III, \IV, \II^*, \III^{*},\IV^{*} \rbrace$ and the proportion with which those types  occur, among all minimal elliptic curves over $\Q$, are given by the $c_{\T}$ of (\ref{c_T_second_time}).
\end{cor}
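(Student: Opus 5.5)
The plan is to deduce the corollary directly from Theorems~\ref{j=0_tors=2}, \ref{j=0_tors=3} and \ref{j=0_tors=0}. The corollary is really about $\typ_3(E)$, since $3$ is the prime of additive reduction common to the $j=0$ family. First I partition the minimal curves $E/\Q$ with $j(E)=0$ and $\height(E)\le X$ by torsion subgroup. As recalled before the definition of $N^0_{G,\T}$, the only possibilities are $\{\infty\}$, $\Z/2\Z$, $\Z/3\Z$ and $\Z/6\Z$, and $\Z/6\Z$ occurs for exactly one curve up to isomorphism. Hence for every type $\T$,
\[
N^0_{\T}(X) = N^0_{\{\infty\},\T}(X) + N^0_{\Z/2\Z,\T}(X) + N^0_{\Z/3\Z,\T}(X) + \varepsilon_{\T}(X),
\]
with $0\le \varepsilon_{\T}(X)\le 1$.

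The set of types is handled by the three theorems. Each type arising for torsion $\Z/2\Z$ or $\Z/3\Z$ already lies in the set $\{\II,\III,\IV,\II^*,\III^*,\IV^*\}$ of Theorem~\ref{j=0_tors=0}. Conversely, every type in that set actually occurs, because each $c_{\T}$ in (\ref{c_T_second_time}) is positive. For the single $\Z/6\Z$ curve, $y^2=x^3+1$, I would compute $\typ_3$ directly with Tate's algorithm. I expect its type also to lie in this set, as it must for any additive reduction type at $3$ of a $j=0$ curve. In any case it contributes at most $1$ to the counts and cannot affect any proportion.

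For the asymptotics, Theorems~\ref{j=0_tors=2} and \ref{j=0_tors=3} give $N^0_{\Z/2\Z,\T}(X)=O(X^{1/6})$ and $N^0_{\Z/3\Z,\T}(X)=O(X^{1/4})$. These are absorbed into the error term of Theorem~\ref{j=0_tors=0}, so
\[
N^0_{\T}(X) = c_{\T}\cdot \frac{62}{63\sqrt{27}\,\zeta(6)}X^{1/2}+O(X^{1/4}).
\]
The coefficients $243,81,27,9,3,1$ sum to $364$, so the $c_{\T}$ sum to $1$. Summing over $\T$ therefore gives that the total number of minimal $j=0$ curves of height $\le X$ is
\[
\frac{62}{63\sqrt{27}\,\zeta(6)}X^{1/2}+O(X^{1/4}).
\]
Dividing and letting $X\to\infty$ shows that the proportion of curves with $\typ_3(E)=\T$ tends to $c_{\T}$.

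There is no real obstacle here once the three theorems are available. The only points needing care are that the subleading torsion families really are of strictly smaller order than $X^{1/2}$, which they are by the stated exponents, and the bookkeeping for the lone $\Z/6\Z$ curve.
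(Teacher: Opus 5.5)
Your proposal is correct and follows the paper's route: it is the same dominance argument. The trivial-torsion family of Theorem~\ref{j=0_tors=0}, of order $X^{1/2}$, swamps the $\Z/2\Z$ and $\Z/3\Z$ families (orders $X^{1/6}$ and $X^{1/4}$) and the lone $\Z/6\Z$ curve $y^2=x^3+1$; that curve has $B\equiv 1 \pmod{9}$, hence type $\III$, as you expected. You were also right to read $\typ_2$ as $\typ_3$, and note that the proof printed under this corollary in the paper is really the $j=1728$ version of the same argument (a copy-paste slip), so your write-up is the correct $j=0$ adaptation.
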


\subsection{Organization of the Paper} In the next section we determine minimal models for elliptic curves with $j$-invariants 0 and 1728, including the additional conditions for the isogeny-torsion graphs and torsion subgroups.  We then apply Tate's algorithm to these minimal models to determine the precise arithmetic criteria for each Kodaira-N\'eron type.  Once we have determined these criteria, we apply standard sieve methods to determine the asymptotics.  We use the final section of the paper as a repository for most of the tables and figures.  Throughout the paper we use LMFDB \cite{lmfdb} notation for elliptic curves and isogeny classes and give links when appropriate.

\section{Weierstrass Models} \label{w_models_section}

\subsection{Minimal Models}

Every elliptic curve over $\Q$ can be written in short Weierstrass form 
\begin{align} \label{weier}
y^2 = x^3 + Ax +B
\end{align}
with $A, B \in \Z$.   Let $c_4$ and $c_6$ denote the usual invariants of $E$ and let $\Delta$ be the discriminant of $E$.  If $p$ is a prime number, then by \cite[Rmk.~1.1]{aec} if $v_p(\Delta) <12$, or $v_p(c_4)<4$, or $v_p(c_6)<6$, then $E$ is minimal at $p$.  If $p \ne 2,3$, then the converse holds as well, while if $p=2$ or $3$, then Tate's algorithm can be used to determine the minimality at $p$.  If $E$ is minimal at all primes $p$, then $E$ is said to be minimal. 

The Kodaira-N\'eron classification of singular fibers is an extensive subject and so we will only recall the topics we need and point the reader to \cite[Ch.~IV, \S9]{ataec} for a detailed background.  Tate's algorithm is an 11-step algorithm that determines, for each prime $p$, the local conductor, Tamagawa number, and Kodaira-N\'eron type.   (If the algorithm proceeds to Step 11, the elliptic curve is non-minimal and it is re-run under a suitable change of variables.)

If $p$ is multiplicative then $\typ_p(E) = \I_n$ for some $n \geq 1$, where $n = -v_p(j(E))$.  If $p$ is additive, then $p$ can be potentially multiplicative, in which case $\typ_p(E) = \I_n^*$ for $n \geq 1$, or potentially good.  If $p$ is potentially good, then $\typ_p(E) \in \lbrace \I_0^*, \II, \III, \IV, \II^*, \III^*, \IV^* \rbrace$.    If $E(\Q)_\tors$ is nontrivial, then the Kodaira-N\'eron types have been  determined by the work in \cite{br}. We now determine minimality conditions and the values of $\typ_2(E)$ and $\typ_3(E)$ when $j(E) = 1728$ or $0$, respectively.  Many of these cases have been determined in \cite{br} and we will give complete details for those that do not.

\begin{thm} \label{1728_min_thm}
Let $E$ be an elliptic curve over $\Q$ with $j$-invariant 1728 written in short Weierstrass form $y^2 = x^3 + Ax$ with $A \in \Z$. Then $E$ is minimal if and only if  $A$ is fourth-power-free.  Moreover, $\typ_2(E)$ is given in Tables \ref{j1728_finite_tables}, \ref{j1728_(2,2)tors},  \ref{j1728-Z2tors-size4}, and \ref{j1728-generic}. 
\end{thm}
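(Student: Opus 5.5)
The plan has two parts: minimality, which is a short valuation computation, and the Kodaira-N\'eron types, which come from running Tate's algorithm at $p=2$ case by case.

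\textbf{Invariants.} For $E: y^2 = x^3+Ax$ we have
\[
c_4 = -48A, \qquad c_6 = 0, \qquad \Delta = -64A^3 .
\]

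\textbf{Odd primes.} Let $p$ be odd. If $p\neq 3$, the criterion of \cite[Rmk.~1.1]{aec} and its converse say $E$ is non-minimal at $p$ iff $v_p(\Delta)\ge 12$ and $v_p(c_4)\ge 4$, i.e.\ iff $v_p(A)\ge 4$. For $p=3$ the converse is not automatic, but it holds in this family. If $3^4\mid A$, substitute $x=9x'$, $y=27y'$; this gives the integral model $y'^2=x'^3+(A/81)x'$, so $E$ is non-minimal. If $v_3(A)\le 3$, then $v_3(\Delta)\le 9<12$, so $E$ is minimal at 3. So at every odd prime, $E$ is minimal iff $v_p(A)\le 3$.

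\textbf{The prime 2.} Here $v_2(\Delta)=6+3v_2(A)$ and $v_2(c_4)=4+v_2(A)$.
\begin{itemize}
\item If $v_2(A)\le 1$, then $v_2(\Delta)\le 9<12$, so $E$ is minimal at 2.
\item If $v_2(A)\ge 4$, then $(x,y)=(4x',8y')$ gives the integral model $y'^2=x'^3+(A/16)x'$, so $E$ is not minimal.
\item If $v_2(A)\in\{2,3\}$, both inequalities $v_2(\Delta)\ge 12$ and $v_2(c_4)\ge 4$ hold, so the criterion is inconclusive. I would run Tate's algorithm at $p=2$ directly and check that it terminates before Step 11. For example, when $A=4A'$ with $A'$ odd, I expect the algorithm to stop at type $\III^*$ or $\I_n^*$ rather than reach the non-minimal step.
\end{itemize}
An independent check for the middle case: any change of coordinates keeping the form $y^2=x^3+Ax$ integral must scale $A$ by $u^4$. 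A minimal model with $a_1,a_3\neq 0$ could in principle exist, but its $c_4=-48A/u^4$ must be $2$-integral with $v_2(c_4)\ge 0$. With $u=2$ and $v_2(A)=3$ we get $v_2(c_4)=3$ and $v_2(\Delta)=3$. A discriminant of odd valuation with $c_6=0$ is impossible for an integral model over $\Z_2$ of this shape, because the relation $1728\Delta=c_4^3-c_6^2$ forces $v_2(\Delta)=3v_2(c_4)-6=3$, yet integrality requires $v_2(\Delta)\ge 0$ together with the parity constraints of Kraus's conditions at 2. I would therefore invoke Kraus's characterization at $p=2$ (the pair $(c_4,c_6)$ comes from an integral model iff certain congruences hold) to rule this out cleanly. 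Combining the three prime cases, $E$ is minimal iff $A$ is fourth-power-free.

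\textbf{The types.} To fill in the tables, I would split $A$ by $v_2(A)\in\{0,1,2,3\}$ and, for odd $A$, further by $A \bmod 16$. These congruence classes also encode the torsion and isogeny data: $A=-m^2$ gives full $2$-torsion, $A=4m^4$ gives $\Z/4\Z$ (type $T_4^3$ patterns), and so on. For each class I would run Tate's algorithm at 2, applying the substitutions $x\mapsto x+r$, $y\mapsto y+sx+t$ where the algorithm requires them, with $r,t$ chosen from the residues of $A$.
\begin{itemize}
\item When $v_2(A)=1$: $v_2(\Delta)=9$, and I expect type $\III$ or $\I_n^*$, depending on whether $a_4$ is divisible by 4 after translation.
\item When $v_2(A)=2$ or $3$: I expect the deeper types $\I_2^*$, $\I_3^*$, $\III^*$.
\item When $A$ is odd: $v_2(\Delta)=6$, and the residue of $A$ modulo 4 or 8 decides between $\II$ and $\III$ (and possibly $\I_n^*$).
\end{itemize}
The main obstacle is the bookkeeping in Tate's algorithm at 2 for odd $A$ and for $v_2(A)=2,3$. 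There the right translations depend on $A$ modulo 16 or 32, and the tables must match these classes exactly. Where the curves have nontrivial torsion, I would cross-check against \cite{br}.
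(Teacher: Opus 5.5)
Your route is the paper's route. Both use the invariants $c_4=-48A$, $c_6=0$, $\Delta=-64A^3$, the criterion $v_p(\Delta)<12$, and the rescaling $(x,y)=(u^2x',u^3y')$ when $v_p(A)\ge 4$. Both then rely on Tate's algorithm at $2$, with \cite{br} as the reference, for the borderline cases and the types.

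\textbf{The gap.} It sits at the one non-formal point of the minimality claim, $v_2(A)\in\{2,3\}$, where $v_2(\Delta)\ge 12$ and $v_2(c_4)\ge 4$ both hold. There you only say what you expect Tate's algorithm to do. Your ``independent check'' does not work as written:
\begin{itemize}
\item An odd $2$-adic valuation of $\Delta$ with $c_6=0$ is not forbidden: $y^2=x^3+2x$ has $v_2(\Delta)=9$.
\item The identity $v_2(\Delta')=3v_2(c_4')-6=3$ contradicts nothing.
\item The case $v_2(A)=2$ is not treated at all.
\end{itemize}
Kraus's criterion, which you mention only in passing, does settle it once it is actually checked. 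Only $u=2$ needs excluding, since $u=4$ gives $v_2(\Delta')<0$. The rescaled invariants are $c_4'=-3A$ and $c_6'=0$. Kraus's condition at $2$ requires either $c_6'\equiv -1\pmod 4$, or $v_2(c_4')\ge 4$ and $c_6'\equiv 0,8\pmod{32}$. It fails because $v_2(c_4')=v_2(A)\le 3$, so no integral model over $\Z_2$ exists.

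\textbf{Closing it with Tate's algorithm.} Alternatively, as the paper implicitly does, show that Tate's algorithm terminates.
\begin{itemize}
\item For $v_2(A)\ge 1$ the singular point is $(0,0)$, with $a_6=0$, $b_6=0$ and $b_8=-A^2$.
\item So $v_2(A)=1$ always stops at Step 4 with $\III$. There is no translation and no $\I_n^*$, contrary to your guess.
\item For $v_2(A)\ge 2$ you reach Step 6 with $P(T)=T^3+(A/4)T$.
\item If $v_2(A)=3$, then $P\equiv T^3$ and $Y^2$ has a double root. Since $v_2(a_4)=3$, Step 9 ends at $\III^*$.
\item If $v_2(A)=2$, then $P\equiv T(T+1)^2$. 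After $x\mapsto x+2$ one has $a_2=6$, $a_4=4(3+A/4)$ and $a_6=8(1+A/4)$. The Step 7 subprocedure ends at $\I_2^*$ if $A/4\equiv 3\pmod 4$. If $A/4\equiv 1\pmod 4$, then after $y\mapsto y+4$ (so $a_3=8$) it ends at $\I_3^*$.
\item For odd $A$, the shift $x\mapsto x+1$ gives $a_6=1+A$ and $b_8=12(1+A)-(3+A)^2$. The type is $\II$ if $A\equiv 1\pmod 4$ and $\III$ otherwise. Only $A\bmod 4$ matters, and $\I_n^*$ never occurs.
\end{itemize}
Every case ends before Step 11, which also proves minimality at $2$.

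\textbf{Smaller points.}
\begin{itemize}
\item Among fourth-power-free $A$, $\Z/4\Z$ occurs only for $A=4$. That is the $T_4^1$ curve $y^2=x^3+4x$, not a $T_4^3$ pattern; the $T_4^3$ families are $A=\pm t^2$.
\item The rows of Table~\ref{j1728-Z2tors-size4} are interchanged. For $A=t^2$, odd $t$ gives $\II$ and $t=2s$ gives $\I_3^*$. This is what the proof of Theorem~\ref{j1728_thm} actually uses.
\end{itemize}
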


\begin{proof}
We compute $\Delta = -64A^3$, $c_4 = -48A$, and $c_6=0$.  It follows immediately from the discussion at the start of this section that $E$ is minimal at primes $p\geq 5$ if and only if $A$ is fourth-power-free.  It remains to determine the minimality at $2$ and $3$.  All elliptic curves over $\Q$ with $j$-invariant 1728 have nontrivial torsion and therefore the Kodaira-N\'eron types are known by  \cite[Thms.~3.1, 3.4, 3.7]{br}.  We will not reproduce the details and only recap the arguments, as follows.

If $p=3$ and $v_3(A) = 1,2$, or 3, then Tate's algorithm terminates at Step 4, 6, or 9, with $\typ_2(E) = \III, \I_0^*$,  or $\III^*$, respectively. If $p=2$ and $v_2(A) = 1,2,$ or $3$, then Tate's algorithm terminates at Step 3, 4, 7, or 9, with $\typ_2(E) = \II, \III, \I_2^*$ or $\I_3^*$,  or $\III^*$, respectively.    If $v_2(A)$ or $v_3(A) \geq 4$ then Tate's algorithm proceeds to Step 11 and so $E$ is not minimal.  
\end{proof}

\begin{thm} \label{0_min_thm}
Let $E$ be an elliptic curve over $\Q$ with $j$-invariant 0 written in short Weierstrass form $y^2 = x^3 + B$ with $B \in \Z$. Then $E$ is minimal if and only if
\begin{enumerate}
\item $v_2(B) = 0,1,2,3,5$ and $B$ is sixth-power-free, or
\item $v_2(B) = 4$, $B/16 \equiv 3 \pmod{4}$, and $B$ is sixth-power-free.
\end{enumerate} 
Moreover, $\typ_3(E)$  is given in Tables \ref{j0_finite_tables}, \ref{j0_Z3_torsion}, \ref{j0_Z2_torsion}, and \ref{j0_trivial_torsion}.
\end{thm}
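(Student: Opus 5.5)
Compute the invariants of $E: y^2 = x^3 + B$. We have $c_4 = 0$, $c_6 = -864B = -2^5 3^3 B$ and $\Delta = -432B^2 = -2^4 3^3 B^2$. The proof then splits by prime, as in the proof of Theorem \ref{1728_min_thm}.

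\textbf{Primes $p \ge 5$.} Since $c_4 = 0$ and $v_p(\Delta) = 2v_p(B)$, the criterion of \cite[Rmk.~1.1]{aec} and its converse for $p \ne 2,3$ show that $E$ is minimal at $p$ if and only if $v_p(B) < 6$. Equivalently, $B$ is sixth-power-free away from $2$ and $3$.

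\textbf{The prime $p = 3$.} We have $v_3(\Delta) = 3 + 2v_3(B)$, so if $v_3(B) \le 4$ then $v_3(\Delta) < 12$ and $E$ is minimal at $3$. For $v_3(B) \geq 6$ the substitution $(x,y) \mapsto (3^2x, 3^3y)$ visibly produces an integral model, so $E$ is not minimal. The remaining case $v_3(B) = 5$ needs a separate argument. Here I run Tate's algorithm at $3$ on $y^2 = x^3 + B$. The work is organized by $v_3(B) \in \{0,\dots,5\}$ and, when $v_3(B) = 0$, by $B \bmod 9$, since whether $3 \mid a_6$ after translating $x$ depends on $B \bmod 9$.

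The algorithm passes through Steps 2--10 in the standard way, and each branch yields a type among $\II, \III, \IV, \IV^*, \III^*, \II^*$. In the case $v_3(B) = 5$ the algorithm terminates at Step 10 with type $\II^*$ rather than reaching Step 11. This gives minimality at $3$ exactly when $B$ is not divisible by $3^6$. The resulting types are what get recorded in Tables \ref{j0_finite_tables}--\ref{j0_trivial_torsion}. For curves with nontrivial torsion I would cite \cite{br} rather than recompute, and only the trivial-torsion table needs the full computation.

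\textbf{The prime $p = 2$.} This is the main obstacle, because the condition is not simply $v_2(B) < 6$. Since $v_2(\Delta) = 4 + 2v_2(B)$, the cases $v_2(B) \le 3$ give $v_2(\Delta) < 12$, hence minimality. If $v_2(B) \ge 6$, the scaling by $u = 2$ shows non-minimality. For $v_2(B) = 5$, I expect Tate's algorithm at $2$ to stop before Step 11, as in the $3$-adic case, though this must be verified directly.

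The delicate case is $v_2(B) = 4$. Write $B = 16B'$ with $B'$ odd. I would look for a change of variables $x = 4x' + r$, $y = 8y' + 4sx' + t$ making all $a_i$ divisible by $2^i$. The natural guess is $s = 1$, $r = 0$, $t = 4$, which gives $(8y'+4x'+4)^2 = 64x'^3 + 16B'$. Dividing by $64$ gives
\[
y'^2 + x'y' + y' = x'^3 - \frac{x'^2}{4} + \frac{B'-1}{4}.
\]
This is not yet integral, so the right choice has to be found by a small search over $r, s, t \bmod$ appropriate powers of $2$. Equivalently, one can use Kraus's conditions for $(c_4, c_6) = (0, -864B)$ to be the invariants of an integral model after dividing by $u = 2$. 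With $c_4 = 0$, Kraus's criterion at $2$ for $c_6' = c_6/2^6 = -27B/2 = -216B'$ should reduce to a congruence on $B'$ modulo $4$. Non-minimality then occurs exactly when $B' \equiv 1 \pmod 4$, and for $B' \equiv 3 \pmod 4$ Tate's algorithm terminates before Step 11. Running Kraus's criterion is the step I would do first, since it converts the question into checking a single congruence. Combining the conditions at $p = 2$, $p = 3$ and $p \ge 5$ gives exactly conditions (1) and (2) of the theorem.
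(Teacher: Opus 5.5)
\textbf{There is a gap: the decisive computations are announced but never carried out.} Your overall plan is the paper's: go prime by prime, use the $v_p(\Delta)<12$ criterion, run Tate's algorithm at $2$ and $3$, and cite \cite{br} for the torsion tables. But the proposal stops exactly where the content of the theorem lies.

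\textbf{The prime $2$.} The case $v_2(B)=5$ is left as an expectation. For $v_2(B)=4$ the congruence is asserted rather than derived. Your explicit substitution also uses the wrong $s$: with $s=1$ the new $a_2$ is $-1/4$ whatever $B'$ is. Taking $s=0$, i.e.\ $x=4X$, $y=8Y+4$, gives $Y^2+Y=X^3+(B'-1)/4$, which is integral precisely when $B'\equiv 1\pmod 4$.

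For the converse, the paper continues Tate's algorithm. The Step 8 quadratic $Y^2-B'$ has the double root $Y\equiv 1$. After translating $y$ by $4$ the equation becomes $y^2+8y=x^3+(B-16)$ with $a_4=0$. Step 10 then gives $\II^*$ exactly when $v_2(B-16)=5$, i.e.\ $B'\equiv 3\pmod 4$.

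Your Kraus route is a legitimate and somewhat cleaner alternative, because one congruence settles both directions. Here $c_6/2^6=-216B'\equiv 8B'\pmod{32}$. This is never $\equiv -1\pmod 4$, and it is $\equiv 0$ or $8\pmod{32}$ iff $B'\equiv 1\pmod 4$. The same test with $v_2(B)=5$ gives $c_6/2^6\equiv 16\pmod{32}$, so those curves are minimal. Likewise $v_3(c_6/3^6)=2$ disposes of $v_3(B)=5$ without running Tate. These lines must actually be written; as it stands the ``main obstacle'' is only named.

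\textbf{The ``Moreover'' clause.} This is the larger omission. Saying that each branch of Tate's algorithm yields one of six types does not establish the conditions of Table~\ref{j0_trivial_torsion}; you need the explicit run, as the paper gives it.
\begin{itemize}
\item For $3\nmid B$, moving the singular point gives $a_6=B-B^3$, which is always divisible by $3$. What depends on $B\bmod 9$ is whether $9\mid a_6$, not whether $3\mid a_6$. So Step 3 gives $\II$ iff $B\not\equiv\pm1\pmod 9$. Otherwise $b_8=3B^4-12B^2$ has $v_3(b_8)=2$, and Step 4 gives $\III$.
\item If $v_3(B)=1$, Step 3 gives $\II$.
\item If $v_3(B)=2$, then $v_3(b_6)=v_3(4B)=2$ gives $\IV$.
\item If $v_3(B)\ge 3$, the Step 6 cubic $T^3+B/27$ has a triple root mod $3$, and Step 8 decides. $v_3(B)=4$ gives $\IV^*$, and $v_3(B)=5$ falls through to Step 10 and gives $\II^*$.
\item The case $v_3(B)=3$ splits according to $B_1=B/27\bmod 9$, a split your case organization misses. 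After moving the root $T\equiv -B_1$ to $0$, the quadratic is $Y^2-(B_1-B_1^3)/3$. This gives $\IV^*$ when $B_1\equiv\pm2,\pm4\pmod 9$. When $B_1\equiv\pm1\pmod 9$, Step 9 gives $\III^*$ since $v_3(a_4)=v_3(27B_1^2)=3$.
\end{itemize}
Once this is done for arbitrary $B$, the torsion tables follow by specializing to $B=t^3$ and $B=t^2$, so the appeal to \cite{br} is not even necessary.
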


\begin{proof}
We compute  $\Delta = -2^43^3B^2$, $c_4 = 0$, and $c_6 = -2^53^3B$.  It follows from the discussion at the beginning of the section that if $p>3$ then $E$ is minimal if and only if $B$ is sixth-power free.  Let $p=2$ and suppose $v_2(B) >0$. If $v_2(B) = 1,2,3$,  or 5, then Tate's algorithm terminates at Step 3, 6 or 7, 7, or 10, to give $\typ_2(E) = \II$, $\I_0^*$ or $\I_4^*$, $\I_0^*$, or $\II^*$, respectively.   If $v_2(B) = 4$, then we proceed to Step 9.  Applying the standard translation of this step ($y \to y-2$), we consider the new Weierstrass equation for $E$:
\[
y^2 + 8y = x^3 + B - 16.
\]
Since $v_2(B) = 4$, it follows that $v_2(a_6) = v_2(B - 16) \geq 5$.  Since $a_4 =0$, we proceed to Step 10.  Note that $v_2(B-16) = 5$ if and only if $B/16 \equiv 3 \pmod{4}$. If so, then Tate's algorithm concludes at Step 10 with $\typ_2(E) = \II^*$.  If not, then the equation was not minimal to begin with. It now remains to determine the minimality and reduction type at 3.

If $v_3(B) = 0,1,2,3,4,$ or 5, then Tate's algorithm terminates at Step 2 or 3, 3, 5, 6 or 7, 8, or 10, respectively.  If $v_3(B)=6$, then $E$ is not minimal.  If $E$ has nontrivial torsion, $\typ_3(E)$ is given in \cite[Thms.~3.1, 3.4, 3.7]{br} and we do not reproduce those arguments here.  It remains to determine  $\typ_3(E)$ when $j(E) = 0$ and $E(\Q)_\tors$ is trivial.

If $E(\Q)_\tors$ is trivial, then $B$ is neither a cube nor a square (so that $E(\Q)[2]$ and $E(\Q)[3]$ are trivial, respectively).   Since $E$ has bad reduction at 3, we proceed to Step 2 of Tate's algorithm.  

Change variables to move the singular point to $(0,0)$ so that our new Weierstrass model is 
\begin{align} \label{step_2}
y^2 = (x-B)^3 + B = x^3 - 3Bx^2 + 3B^2x + (B-B^3).
\end{align}
Then $b_2 = -12B \equiv 0 \pmod{3}$ and so we proceed to Step 3.  We have $a_6 = B-B^3$ and thus $v_3(a_6) \geq 1$.  Step 3 of the algorithm concludes that $\typ_3(E) = \II$ if $v_3(a_6) = 1$, which we verify is equivalent to $b \equiv \pm 2, \pm 3, \pm 4 \pmod{9}$. We now proceed to Step 4.

In Step 4 we assume $v_2(a_6) \geq 2$ and compute $b_8 = 3B^4 - 12B^2$.  Then $v_2(b_8) \geq 2$.  We conclude that $\typ_3(E) = \III$ if and only if $v_3(b_8) =2$, which we verify is equivalent to $B \equiv \pm 1 \pmod{9}$.  We now proceed to Step 5.

We assume $v_3(b_8) \geq 3$ and compute $b_6 = 4B-4B^3$.  Then $\typ_3(E) = \IV$ if and only if $v_3(b_6)<3$, which is equivalent to $v_3(B) = 2$.  We now proceed to Step 6.

The cumulative results so far show that $v_3(B) \geq 3$. Therefore, $v_3(a_2) \geq 4$, $v_3(a_4) \geq 7$, and $v_3(a_6)\geq 3$, while $a_1=a_3=0$.   In Step 6 we consider the factorization of the polynomial
\[
P(T) = T^3 + a_{2,1}T^2+a_{4,2}T+a_{6,3} \equiv T^3 + a_{6,3} \pmod{3}
\]
Whether or not $a_{6,3} \equiv 0 \pmod{3}$, it is the case that $P(T)$ has a triple root modulo 3 and so we may skip Steps 6 and 7 and proceed to Step 8. 

Change variables  to move the triple root to 0.  Tate's algorithm terminates at Step 8 with $\typ_3(E) = \IV^*$ if and only if the polynomial
\begin{align} \label{ypoly}
Y^2 + a_{3,2}Y - a_{6,4}
\end{align}
has distinct roots, which occurs if and only if $v_3(B)=4$ or $v_3(B)=3$ and $B/27 \equiv \pm2,\pm4 \pmod{9}$.  We may now assume that the polynomial in (\ref{ypoly}) has a repeated root and proceed to Step 9.

Step 9 terminates with $\typ_3(E) = \III^*$ if $v_3(a_4) = 3$.  This occurs if and only if $v_3(B) = 3$ and $B/27 \equiv \pm  1 \pmod{9}$.  We proceed to Step 10, where $v_3(a_4) \geq 4$.

The cumulative effect so far is that $v_3(a_6) \geq 5$.  If $v_3(a_6) = 5$, then Step 10 terminates with $\typ_3(E) = \II^*$, which occurs if and only if $v_3(B) =5$.  If $v_3(a_6)>5$ then the equation is not minimal and we divide $B$ by $3^6$ and rerun the algorithm.  This completes the proof, and the results  are recorded in Table \ref{j0_trivial_torsion}. 
\end{proof}

\begin{rmk}
If $j(E) = 0$, $v_2(B) = 4$, and $B/16 \equiv 1 \pmod{4}$, then the substitution $x = 4X$ and $y = 8Y + 4$ yields the integral minimal model for $E$: 
\[
Y^2 + Y = X^3 + (B-16)/64.
\]
In addition, if $j(E) = 0$ and $E(\Q)_{\tors} \simeq \Z/3\Z$, our results in Table \ref{j0_Z3_torsion} results differ notationally from those of \cite[Table 10]{br}.  This is due to the different choice of model (they use $y^2  + ay = x^3$, whereas we use $y^2 = x^3 +B$). 
\end{rmk}

\subsection{Minimal Models With Torsion} We now  recap what is known about minimal models for elliptic curves with specified torsion when $j(E) \in \lbrace 0,1728 \rbrace$.

\subsubsection{Case 1: $j$-invariant 1728}  If $E/\Q$ is an elliptic curve with $j(E) = 1728$ that is given by the Weierstrass equation $y^2 = x^3 + Ax$, then $P := (0,0)$ is a rational point of order 2 on $E$.  By the main theorem of \cite{clz}, there are exactly four isogeny-torsion graphs over $\Q$ to which $E$ could belong, denoted $T_4^1$, $T_4^2$, $T_4^3$, and $L_2(2)$.  These graphs, augmented with the $j$-invariants of the curves, appear in Table \ref{1728_iso_tor_graphs}.  Up to isomorphism, the graphs $T_4^1$ and $T_4^2$ only occur for the isogeny classes \href{https://beta.lmfdb.org/EllipticCurve/Q/32/a/}{32.a} and \href{https://beta.lmfdb.org/EllipticCurve/Q/64/a/}{64.a}, respectively, while $T_4^3$ and $L_2(2)$ occur infinitely often. From now on we will restrict attention to these two cases.

\begin{prop} \label{1728_minimal_models}
Let $E/\Q$ be a minimal elliptic curve with $j$-invariant 1728.  Then $E$ is isomorphic to \href{https://beta.lmfdb.org/EllipticCurve/Q/32/a/3}{32.a3}, \href{https://beta.lmfdb.org/EllipticCurve/Q/32/a/4}{32.a4},  \href{https://beta.lmfdb.org/EllipticCurve/Q/64/a/3}{64.a3}, \href{https://beta.lmfdb.org/EllipticCurve/Q/64/a/4}{64.a4}, or is a nonzero integral specialization of one of the following curves:
\begin{center}
\begin{tabular}{|c|c|l|l|}
\hline
Graph & $E(\Q)_\tors$ & Model &  Conditions\\
\hline
$T_4^3$ & $\Z/2\Z \times \Z/2\Z$ & $y^2 = x^3-t^2x$ & $t$ squarefree\\
& $\Z/2\Z$ &$y^2 = x^3+t^2x$ & $t$ squarefree \\
\hline
$L_2(2)$ & $\Z/2\Z$ & $y^2 = x^3 +tx$ & $t \ne \pm s^2$, $t$ 4th-power-free \\
\hline
\end{tabular}
\end{center}
\end{prop}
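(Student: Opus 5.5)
By Theorem \ref{1728_min_thm}, every minimal $E$ with $j(E)=1728$ is $y^2=x^3+Ax$ with $A$ a nonzero fourth-power-free integer. So the plan is to sort such $A$ by the rational $2$-power isogeny structure of $E$, and then match each class to a row of the table or to one of the four exceptional curves.

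First I determine the $2$-torsion. Since $E[2]=\{\infty,(0,0),(\pm\sqrt{-A},0)\}$, we have $E(\Q)[2]\simeq \Z/2\Z\times\Z/2\Z$ exactly when $-A$ is a square. Writing $A=-t^2$, fourth-power-freeness of $A$ is equivalent to $t$ being squarefree. This gives the first row of the table.

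Next I look for rational cyclic $4$-isogenies, equivalently for a $2$-isogenous curve which itself has full rational $2$-torsion.
\begin{itemize}
\item The $2$-isogeny with kernel $\langle (0,0)\rangle$ sends $y^2=x^3+Ax$ to $y^2=x^3-4Ax$.
\item This target has full $2$-torsion iff $4A$ is a square, i.e.\ $A=t^2$.
\item Fourth-power-freeness again forces $t$ squarefree, giving the second row.
\end{itemize}
By \cite{clz}, the graph containing $E$ is of type $T_4$ exactly when $E$ lies in an isogeny class with full $2$-torsion somewhere and at least three $2$-power isogenous curves. In both cases $A=\pm t^2$ the class contains $y^2=x^3\mp t^2x$ and $y^2=x^3\pm4t^2x$, and the latter is again of the form $\mp s^2$ up to twisting by the factor $4$.

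One subtlety: when $A=t^2$, the curve $y^2=x^3-4t^2x$ is isomorphic over $\Q$ to $y^2=x^3-(2t)^2x$, and this is the full-$2$-torsion model with parameter $2t$. If $t$ is odd, $2t$ is still squarefree, so it lands back in row one. If $t$ is even, I would reduce via $x\mapsto 4x$ (dividing $A$ by $16$ where allowed), which should again give a squarefree parameter.

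I then have to decide which of $T_4^1,T_4^2,T_4^3$ occurs. For this I would invoke the classification of \cite{clz}: $T_4^1$ and $T_4^2$ require a rational cyclic $8$-isogeny or a $\Z/4\Z$ point. For curves with $j=1728$ this can only happen when the class contains $y^2=x^3\pm 4x$, $y^2=x^3\pm x$, or similar. I would pin these down by solving for a $4$-torsion point, using the criterion that $(0,0)$ is divisible by $2$ iff $A$ is a square, together with the analogous halving condition at $(\pm\sqrt{-A},0)$. This should leave exactly the twists by $\pm1,\pm2$, which correspond to the four listed LMFDB curves in classes 32.a and 64.a. Every remaining $A=\pm t^2$ then gives $T_4^3$.

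Finally, if $A\ne\pm s^2$, then $E(\Q)[2]=\Z/2\Z$. No $2$-isogenous curve has full $2$-torsion, since that would require $-4A$ or $4A$ to be a square. There is also no other rational $2$-isogeny, since $(0,0)$ is the only rational $2$-torsion point. So the graph is $L_2(2)$, and the torsion is exactly $\Z/2\Z$ because any odd torsion is excluded for these curves.

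For the torsion column in the $T_4^3$ rows, I would check that $E(\Q)_\tors$ is exactly $\Z/2\Z\times\Z/2\Z$ (respectively $\Z/2\Z$) for non-exceptional $t$, i.e.\ that there is no $\Z/4\Z$ point. This reduces to showing the halving equations have no solutions except for $t\in\{\pm1,\pm2\}$.

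I expect the main obstacle to be this separation of $T_4^1,T_4^2$ from $T_4^3$: identifying precisely which $t$ give an extra $4$-torsion point or an $8$-isogeny, and matching them to 32.a3, 32.a4, 64.a3, 64.a4. I would handle it by the explicit halving computation and direct comparison with LMFDB.
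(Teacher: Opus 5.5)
Your trichotomy is sound and is an elementary replacement for what the paper does. The paper simply reads the models off \cite[Table 5]{clz} and imposes Theorem~\ref{1728_min_thm}. Your derivation runs as follows: $-A$ a square gives full $2$-torsion; $A$ a square makes the $2$-isogenous curve $y^2=x^3-4Ax$ have full $2$-torsion; otherwise the $2$-power part of the class is just $\{y^2=x^3+Ax,\ y^2=x^3-4Ax\}$. Finally, $A=\pm t^2$ is fourth-power-free iff $t$ is squarefree. For the $L_2(2)$ conclusion you also need that there are no rational isogenies of odd degree. This holds because $E$ has CM by $\Z[i]$ and $2$ is the only prime ramified in $\Q(i)$, or it can be taken from \cite{clz}.

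The gap is in the step you flag as the main obstacle: separating $T_4^3$ from $T_4^1,T_4^2$. Your proposed method does not work as written, for two reasons.

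First, your halving criterion is false. On $y^2=x^3+Ax$, the point $(0,0)$ lies in $2E(\Q)$ iff $A=d^2$ with $2d$ a rational square, i.e.\ $A\in 4(\Q^\times)^4$. For fourth-power-free $A$ this means only $A=4$. For example, $y^2=x^3+x$ has $A$ a square but torsion $\Z/2\Z$. With your criterion every curve in the second row would acquire a point of order $4$.

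Second, even with the correct criterion, halving on the $j=1728$ vertices cannot finish the job. On $y^2=x^3-t^2x$ no $2$-torsion point lies in $2E(\Q)$, since for $(e,0)$ you need $e-e'$ to be a square for both other roots $e'$ (e.g.\ $t$ and $2t$ for $e=t$). So the only $j=1728$ curve with a $4$-torsion point is $y^2=x^3+4x$, in class 32.a. The class 64.a (your $A=-4$ and $A=1$) carries its $\Z/4\Z$ point on a $j=66^3$ vertex, so your computation would wrongly place it in $T_4^3$. Also note that $T_4$ graphs never contain cyclic $8$-isogenies; the three types differ only in which vertices carry a $\Z/4\Z$ point.

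To close the gap, treat the two $j=66^3$ leaves of $y^2=x(x-t)(x+t)$, namely $y^2=x(x^2\pm 6tx+t^2)$. Each has $(0,0)$ as its only rational $2$-torsion point, since the quadratic factor has discriminant $32t^2$. On each, $(0,0)$ lies in $2E(\Q)$ iff $\pm t$ or $\pm 2t$ is a square, with matching sign. For $t>0$ squarefree this happens exactly when $t\in\{1,2\}$, giving 32.a ($A=-1,4$) and 64.a ($A=-4,1$). With the correct criterion, the torsion column of the table also holds for all non-exceptional $t$. Alternatively, use the fact from \cite{clz} that $T_4^1$ and $T_4^2$ occur only for these two classes, which is what the paper relies on.
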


\begin{proof}
The parameterizations appear in \cite[Table 5]{clz} and the minimality conditions follow immediately from Theorem \ref{1728_min_thm}.
\end{proof}

\subsubsection{Case 2: $j$-invariant 0} If $E/\Q$ is an elliptic curve with $j(E) = 0$, then $E(\Q)_\tors \in \lbrace \lbrace \infty \rbrace, \Z/2\Z, \Z/3\Z, \Z/6\Z \rbrace$.  Note that the group $\Z/6\Z$ only occurs in the isogeny class \href{https://beta.lmfdb.org/EllipticCurve/Q/36/a/}{36.a}.

\begin{prop} \label{j0_min_prop}
Let $E/\Q$ be a minimal elliptic curve with $j$-invariant 0.  Then $E$ is isomorphic to \href{https://beta.lmfdb.org/EllipticCurve/Q/27/a/3}{27.a3}, \href{https://beta.lmfdb.org/EllipticCurve/Q/27/a/4}{27.a4}, \href{https://beta.lmfdb.org/EllipticCurve/Q/36/a/3}{36.a3}, \href{https://beta.lmfdb.org/EllipticCurve/Q/36/a/4}{36.a4}, or is a nonzero integral specialization of one of the following curves:
\begin{center}
\begin{tabular}{|c|c|c|}
\hline
$E(\Q)_\tors$ & Model&  Conditions \\
\hline
$\Z/2\Z$ & $y^2 = x^3+t^3$ ($t \ne -3,1$)  & $t \ne s^2$, $t$ squarefree \\
\hline
$\Z/3\Z$ & $y^2 = x^3 + t^2$ ($t \ne 1, 4$) & $t \ne s^3$, $t$ cubefree \\
\hline
 & & $t \ne s^2, s^3$ \\
$\lbrace \infty \rbrace$& $y^2 = x^3 + t$ & $t$ 6th-power-free \\
&& if $v_2(t) = 4$, then $t/16 \equiv 3 \pmod{16}$ \\
\hline
\end{tabular}
\end{center}
\end{prop}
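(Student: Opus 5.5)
The plan is to start from Theorem \ref{0_min_thm}: every minimal $E/\Q$ with $j(E)=0$ is $y^2=x^3+B$ with $B$ satisfying conditions (1) or (2) there. It then suffices to show that the torsion subgroup of $y^2=x^3+B$ is controlled by whether $B$ is a square or a cube, and to translate minimality of $B$ into the stated conditions on $t$ in each row.

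\textbf{Torsion.} A rational $2$-torsion point is a rational root of $x^3+B$, so $E(\Q)[2]\neq 0$ exactly when $B=-u^3$; replacing $u\mapsto -t$ gives $B=t^3$. For $3$-torsion, the $3$-division polynomial of $y^2=x^3+B$ is $3x^4+12Bx=3x(x^3+4B)$. The root $x=0$ gives the points $(0,\pm\sqrt B)$, which are rational exactly when $B=t^2$. The other roots are $x^3=-4B$, which requires $y^2=-3B$. I expect this to force $B=-432s^6$, i.e. the curve $27.a$ (isomorphic to a twist), accounting for the exceptional curves 27.a3, 27.a4. By the classification of torsion for CM curves with $j=0$ (or Nagell--Lutz together with the known list), the only other possible group is $\Z/6\Z$. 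That case needs $B$ to be both a square and a cube, so $B=s^6$, which is non-minimal unless $B=1$ (curve 36.a); the remaining 36.a curve is a twist handled the same way.

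\textbf{Minimality in each row.}
\begin{itemize}
\item For $B=t^3$, sixth-power-freeness of $B$ is equivalent to $t$ squarefree. Note that $v_2(t^3)\in\{0,3\}$ automatically lies in the allowed list. We exclude $t=s^2$, since then $B$ would be a sixth power and the torsion would be $\Z/6\Z$. The excluded values $t=1,-3$ are the curves 36.a and the $27.a$ curve with extra $3$-torsion.
\item For $B=t^2$, sixth-power-free means $t$ cubefree. Then $v_2(t^2)\in\{0,2,4\}$, and the $v_2=4$ case requires $t^2/16\equiv 3\pmod 4$. This is impossible for a square, so $v_2(t)=2$ has to be excluded. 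I must check whether this is absorbed by the table's conditions; if not, the statement should read ``minimal'' modulo this check. The values $t=1,4$ correspond to the exceptional curves.
\item For trivial torsion, we take $B=t$ with $t$ neither a square nor a cube, together with the conditions of Theorem \ref{0_min_thm}. (The table's ``mod 16'' looks like a typo for ``mod 4''.)
\end{itemize}

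\textbf{Main obstacle.} The hardest step is the careful bookkeeping of the exceptional curves. This means verifying exactly which small $t$ produce extra torsion (the $x^3=-4B$ branch and $\Z/6\Z$) and matching them with the LMFDB labels. I would do this by direct computation on the finitely many candidate $B$, up to sixth powers. The parametrizations themselves can also be cited from \cite{clz} or \cite{br}, as in Proposition \ref{1728_minimal_models}.
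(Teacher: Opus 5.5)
Your outline is sound and has the same skeleton as the paper's proof: minimal short models come from Theorem~\ref{0_min_thm}, and you then sort them by torsion. The difference is the torsion step. The paper computes nothing there; it cites \cite[Table 5]{clz} for the three parametrizations. You instead derive the parametrization from the $2$- and $3$-division polynomials. This recovers the classical description of torsion on $y^2=x^3+D$ with $D$ sixth-power-free: $\Z/6\Z$ if $D=1$; $\Z/3\Z$ if $D=-432$ or $D$ is a square $\neq 1$; $\Z/2\Z$ if $D$ is a cube $\neq 1$; trivial otherwise.

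Your route is self-contained, and it exposes defects in the table that the bare citation hides.
\begin{itemize}
\item You are right that ``mod 16'' must be ``mod 4'', and this is not cosmetic. Take $t=112$: by Theorem~\ref{0_min_thm}(2), since $7\equiv 3 \pmod 4$, it gives a minimal curve with trivial torsion, yet $7\not\equiv 3\pmod{16}$. The factor $62/63$ in the proof of Theorem~\ref{j=0_tors=0} is the mod-4 density.
\item Your worry about $v_2(t)=2$ in the $\Z/3\Z$ row needs no fix. The proposition only asserts that a minimal curve has one of the listed forms, so the row conditions need only be necessary, and ``$t$ cubefree'' is necessary. The extra restriction $v_2(t)\in\{0,1\}$ matters only when the row is used as an equivalence; the paper adds it in the proof of Theorem~\ref{j=0_tors=3}.
\end{itemize}

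Your identification of the exceptional curves is partly wrong, though the logic survives once corrected.
\begin{itemize}
\item The branch $x^3=-4B$ yields only $B=-432s^6$, which is 27.a3 ($y^2+y=x^3-7$).
\item 27.a4 is $y^2+y=x^3\cong y^2=x^3+16$, the value $t=4$ of the square family.
\item The value $t=-3$ gives $y^2=x^3-27$. This is 36.a3, not a 27.a curve, and it has no extra $3$-torsion: $-27$ is not a square and $108$ is not a cube. It is the quadratic twist by $-3$ (and a $3$-isogenous curve) of 36.a4, i.e.\ $y^2=x^3+1$.
\end{itemize}
So the exclusions $t\neq -3,1$ and $t\neq 1,4$ are exactly the parameters landing in the classes 36.a and 27.a, which the statement lists separately.

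Also note that in the short-model sense of Theorem~\ref{0_min_thm}, $y^2=x^3-432$ and $y^2=x^3+16$ are not minimal, since $v_2(B)=4$ with $B/16\equiv 1 \pmod 4$. Hence 27.a3 and 27.a4 never actually arise. Listing them is harmless: your argument only needs every excluded parameter to correspond to a curve on the exception list, and that holds with the corrected identifications.
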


\begin{proof}
Similar to Proposition \ref{1728_minimal_models}, the parameterizations appear in \cite[Table 5]{clz} and the minimality conditions follow immediately from Theorem \ref{0_min_thm}.
\end{proof}

\section{The Proportion of Some Sets of Integers}

In this section we will determine the proportions of certain sets of integers with a view toward counting elliptic curves with specified Kodaira-N\'eron symbol.  Recall that the proportion of $k$th-power-free integers has proportion $1/\zeta(k)$ among all integers and
\[
\#\lbrace 1 \leq n \leq X \mid n \text{ is $k$th-power-free} \rbrace = \frac{x}{\zeta(k)} + O(X^{1/k}).
\]
We record the following lemma here for ease of reference later in the paper.

\begin{lem} \label{squarefree}
The proportion of $k$th-power-free positive integers $n$ such that:
\begin{enumerate}
\item $k=2$ and $n$ is odd is $(2/3) \cdot (1/\zeta(2))$; \label{odd_sqf}
\item $k=2$ and $n$ is coprime to 3, is $(3/4) \cdot (1/\zeta(2))$;
\item $k=3$ and $n$ is odd is $(4/7) \cdot (1/\zeta(3))$;
\item $k=3$ and $v_2(n)=1$ is $(2/7) \cdot (1/\zeta(3))$;
\item $k=4$ and $n$ is odd is $(8/15) \cdot (1/\zeta(4))$;
\end{enumerate} 
\end{lem}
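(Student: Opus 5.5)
The approach is to use the Euler product for the density of $k$th-power-free integers and modify only the local factor at the prime in question. Write the density of $k$th-power-free integers as
\[
\frac{1}{\zeta(k)}=\prod_{p}\left(1-p^{-k}\right),
\]
where the local factor $1-p^{-k}$ is the density of integers with $v_p(n)<k$. These local conditions are independent. By the Chinese remainder theorem together with a standard sieve (M\"obius inversion over $d^k$, as for the count of $k$th-power-free integers recalled above), the density of $k$th-power-free $n$ with a prescribed condition on $v_q(n)$ is
\[
\frac{1}{\zeta(k)}\cdot\frac{\delta_q}{1-q^{-k}}.
\]
Here $\delta_q$ is the density of integers satisfying the prescribed condition on $v_q(n)$, and that condition must force $v_q(n)<k$. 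So each item reduces to computing $\delta_q/(1-q^{-k})$.

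Concretely, I would first prove the general statement. Fix a prime $q$ and a set $S\subseteq\{0,\dots,k-1\}$. Then
\[
\#\{n\le X:\ n\text{ $k$th-power-free},\ v_q(n)\in S\}=\frac{X}{\zeta(k)}\cdot\frac{\sum_{j\in S}q^{-j}(1-q^{-1})}{1-q^{-k}}+O(X^{1/k}).
\]
The cleanest proof writes $n=q^j m$ with $q\nmid m$ and $m$ $k$th-power-free. It then uses the count of $k$th-power-free integers coprime to $q$, which is $\frac{X}{\zeta(k)}\cdot\frac{1-q^{-1}}{1-q^{-k}}+O(X^{1/k})$. That count comes from $\sum_{d^k\le X,\ q\nmid d}\mu(d)\cdot\#\{m\le X/d^k: q\nmid m\}$, with the error terms summed in the usual way. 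The only point needing care is that the error stays $O(X^{1/k})$ after the restriction $q\nmid m$. Each inner count carries an $O(1)$ error, and there are $O(X^{1/k})$ values of $d$, so this is routine.

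Then I would plug in each case:
\begin{enumerate}
\item $k=2$, $q=2$, $S=\{0\}$: $(1/2)/(3/4)=2/3$.
\item $k=2$, $q=3$, $S=\{0\}$: $(2/3)/(8/9)=3/4$.
\item $k=3$, $q=2$, $S=\{0\}$: $(1/2)/(7/8)=4/7$.
\item $k=3$, $q=2$, $S=\{1\}$: $(1/4)/(7/8)=2/7$.
\item $k=4$, $q=2$, $S=\{0\}$: $(1/2)/(15/16)=8/15$.
\end{enumerate}
All five match the stated constants.

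I expect no real obstacle. The only subtlety is the bookkeeping of the error term in the sieve, and the argument is the classical one for $k$th-power-free counts.
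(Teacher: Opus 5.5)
Your proposal is correct and follows the same idea as the paper: replace the local factor $1-q^{-k}$ of $1/\zeta(k)$ by the local density of the new condition at $q$. This is exactly the paper's computation $\frac{1}{\zeta(2)}\cdot\frac{(1-1/4)^{-1}}{(1-1/2)^{-1}}$, and all five of your constants check out. The one addition is that you justify the independence of the local conditions and the $O(X^{1/k})$ error with the M\"obius sieve over $d$ coprime to $q$, whereas the paper leaves that rigor to the cited reference.
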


These proportions are easily derived from the main theorem of \cite{integers_paper}.  However, we do not need the full strength of that result for Lemma \ref{squarefree}.  We need only augment certain local Euler factors of $\zeta(k)$.  For example, the proportion of Lemma \ref{squarefree}.\ref{odd_sqf} is given by 
\[
\frac{1}{\zeta(2)} \cdot \frac{\left(1-1/4 \right)^{-1}}{\left(1-1/2\right)^{-1}} = \frac{2}{3 \zeta(2)}
\]
since we remove the condition that $n$ be squarefree at 2 and replace it with the condition that $n$ be odd. The other proportions follow similarly. 

\section{The Proportion of Kodaira-N\'eron Symbols}

\subsection{$j$-invariant 1728} We now count minimal elliptic curves with $j$-invariant 1728 and their Kodaira-N\'eron symbols.  Recall that with two exceptions, all such elliptic curves have torsion subgroup $\Z/2\Z$ or $\Z/2\Z \times \Z/2\Z$.  The torsion subgroup $\Z/2\Z$ shows up in both the isogeny-torsion graph $T_4^3$ and $L_2(2)$.  Since the $L_2(2)$ graphs dominate the $T_4^3$, we choose to  filter our counts by isogeny-torsion graph, rather than by torsion subgroup.   We now prove Theorem \ref{j1728_thm}.

\begin{proof}[Proof of Theorem \ref{j1728_thm}]
Set $G = T_4^3$ and let $X >0$.  We first count minimal elliptic curves with torsion subgroup $\Z/2\Z \times \Z/2\Z$ and height $\leq X$.  By  Proposition \ref{1728_minimal_models}, these are given by 
\[
y^2 = x^3 -t^2 x,
\]
with $4t^6 <X$ and $t$ squarefree; the number of such integers is 
\[
\frac{1}{\zeta(2)} \cdot \left(\frac{X}{4}\right)^{1/6} + O(X^{1/12}).
\]
Of these elliptic curves, $\typ_2(E) = \III$ if $t$ is odd and $\I_2^*$ if $t$ is even, by Theorem \ref{1728_min_thm} and Table \ref{j1728_(2,2)tors}.  By Lemma \ref{squarefree} the proportion of odd squarefree integers is $2/3\zeta(2)$, while the proportion of even squarefree integers is $1/3\zeta(2)$.  Therefore, 
\[
N_{T_4^3,\T}^{1728}(X) =\frac{c_{\T}}{\zeta(2)\sqrt[3]{2}}X^{1/6} + O(X^{1/12}),
\]
where
\[
c_{\T} = \begin{cases} 2/3 & \text{ if } \T =   \III, \text{ and} \\
1/3 & \text{ if } \T = \I_2^*. \end{cases}
\]
It now remains to determine the asymptotic for the case of $j=1728$, $G = T_4^3$, and torsion subgroup $\Z/2\Z$.  By Proposition \ref{1728_min_thm}, all such minimal elliptic curves are given by 
\begin{align} \label{min_j1728_Z2_tors}
y^2 = x^3 + t^2x,
\end{align}
with $t$ squarefree; by Table  \ref{j1728-Z2tors-size4} they satisfy $\typ_2(E) = \II$ if $t$ is odd and squarefree and $\typ_2(E) = \I_3^*$ if $t$ is even and squarefree.   The height of an elliptic curve with Weierstrass model given by Equation (\ref{min_j1728_Z2_tors}) is $4t^6$.  Therefore, 
\[
N_{T_4^3,\T}^{1728}(X) =\frac{c_{\T}}{\zeta(2)\sqrt[3]{2}}X^{1/6} + O(X^{1/12}),
\]
where
\[
c_{\T} = \begin{cases} 2/3 & \text{ if } \T =   \II, \text{ and} \\
1/3 & \text{ if } \T = \I_3^*. \end{cases}
\]
This completes the proof of the case $G= T_4^3$.  

Next, let $G = L_2(2)$.  Working modulo $2^4$, it follows directly from the arithmetic criteria in Table \ref{j1728-generic} that Kodaira-N\'eron types $\III, \II, \I_2^*, \I_3^*$, and $\III^*$ occur with proportions $8/15$, $4/15$, $1/15$, $1/15$, and $1/15$, respectively.  The models of $j$-invariant 1728 elliptic curves that are minimal,  belong to the graph $L_2(2)$, and have height $\leq X$  are parameterized by fourth-power-free integers $t$ that are not squares such that $4|t|^3\leq X$.

The number of  fourth--power-free integers $t$ with $t \leq (X/4)^{1/3}$ is  
\[
\frac{1}{\zeta(4)} \cdot \left(X/4\right)^{1/3}  + O(X^{1/12}),
\]
while the number of square integers $t \leq  (X/4)^{1/3}$ is  $\sim \sqrt{(X/4)^{1/3}} = O(X^{1/6})$.  It follows that 
\[
N_{L_2(2),\T}^{1728}(X) = \frac{1}{\zeta(4)} \left(\frac{X}{4}\right)^{1/3} + O(X^{1/6}),
\]
where 
\[
c_{\T} = \begin{cases} 8/15 & \text{ if } \T = \III, \\
4/15 & \text{ if } \T = \II, \\
1/15 & \text{ if } \T = \I_2^*,  \I_3^*, \text{ or } \III^*,\end{cases}
\]
as claimed. 
\end{proof}

\begin{rmk}
Observe that the four values $2/3, 2/3, 1/3, 1/3$ of $c_{\T}$ in  Theorem \ref{j1728_thm}, part 1, do not sum to 1.  This is due to the fact that we are working with two distinct Weierstrass models in the course of the proof.  The \emph{proportions} with which the various Kodaira-N\'eron symbols occur are 1/3, 1/3, 1/6, and 1/6. 
\end{rmk}

\begin{cor}
Let $E/\Q$ be an elliptic curve with $j$-invariant 0.  Then $\typ_2(E) \in \lbrace \II, \III, \IV, \II^*, \III^{*},\IV^{*} \rbrace$ and the proportion with which those types  occur, among all minimal elliptic curves over $\Q$, are given by the $c_{\T}$ of Equation (\ref{c_T_second_time}).
\end{cor}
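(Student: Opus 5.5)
The plan is to deduce the corollary from Theorems \ref{j=0_tors=2}, \ref{j=0_tors=3}, and \ref{j=0_tors=0}, together with Proposition \ref{j0_min_prop}. The corollary's symbol $\typ_2(E)$ is read as the Kodaira--N\'eron symbol those three theorems count, namely the one recorded in Tables \ref{j0_Z3_torsion}, \ref{j0_Z2_torsion}, and \ref{j0_trivial_torsion}. The argument is a dominance argument: the family with trivial torsion has far more members of bounded height than the families with nontrivial torsion, so it alone determines the limiting proportions.

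First, by Proposition \ref{j0_min_prop}, every minimal $E/\Q$ with $j(E)=0$ falls into one of the following classes:
\begin{enumerate}
\item one of the finitely many curves \href{https://beta.lmfdb.org/EllipticCurve/Q/27/a/3}{27.a3}, 27.a4, 36.a3, 36.a4, which includes the unique curve with torsion $\Z/6\Z$;
\item a specialization with $E(\Q)_\tors\simeq\Z/2\Z$;
\item a specialization with $E(\Q)_\tors\simeq\Z/3\Z$;
\item a specialization with trivial torsion.
\end{enumerate}
So for each symbol $\T$,
\[
N^0_{\T}(X) = N^0_{\lbrace\infty\rbrace,\T}(X) + N^0_{\Z/2\Z,\T}(X) + N^0_{\Z/3\Z,\T}(X) + O(1).
\]

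For the membership claim, the possible symbols are the union of the sets listed in the three theorems, plus the symbols of the four exceptional curves read off from Table \ref{j0_finite_tables}. The sets $\lbrace \III,\III^*\rbrace$ from Theorem \ref{j=0_tors=2} and $\lbrace \II,\III,\IV,\IV^*\rbrace$ from Theorem \ref{j=0_tors=3} are both contained in $\lbrace \II,\III,\IV,\II^*,\III^*,\IV^*\rbrace$ from Theorem \ref{j=0_tors=0}. I would check Table \ref{j0_finite_tables} to confirm that the exceptional curves introduce no new symbol.

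For the proportions, Theorem \ref{j=0_tors=0} gives $N^0_{\lbrace\infty\rbrace,\T}(X)\asymp X^{1/2}$, while Theorems \ref{j=0_tors=2} and \ref{j=0_tors=3} bound the other two contributions by $O(X^{1/6})$ and $O(X^{1/4})$. Summing over $\T$ gives
\[
\sum_{\T} N^0_{\T}(X) = \frac{62}{63\sqrt{27}\zeta(6)}X^{1/2} + O(X^{1/4}),
\]
because the constants $c_{\T}$ in (\ref{c_T_second_time}) sum to $364/364=1$. Dividing, we get $N^0_{\T}(X)/\sum_{\T'}N^0_{\T'}(X) \to c_{\T}$, which are exactly the proportions in (\ref{c_T_second_time}). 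The only step requiring care is the bookkeeping between the finite table and the three families, to make sure no symbol outside the stated set appears.
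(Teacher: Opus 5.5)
Your proposal is correct and uses the same dominance argument as the paper. The proof printed after this corollary is actually the $j=1728$ version ($L_2(2)$ curves of order $X^{1/3}$ swamping $T_4^3$ curves of order $X^{1/6}$), evidently misplaced, and your argument is its intended $j=0$ analogue: trivial-torsion curves of order $X^{1/2}$ swamp the $O(X^{1/4})$ and $O(X^{1/6})$ contributions from $\Z/3\Z$ and $\Z/2\Z$, with $\sum_{\T}c_{\T}=1$. Reading $\typ_2$ as $\typ_3$ is right (taken literally, $y^2=x^3+8$ has $\typ_2=\I_0^*$), and the check of Table~\ref{j0_finite_tables} you deferred is immediate, since its types $\IV^*,\II,\III^*,\III$ all lie in the stated set.
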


\begin{proof}
The total number of minimal elliptic curves of $j$-invariant 1728 and height $\leq X$ is $O(X^{1/3})$; those that belong to  isogeny-torsion graphs isomorphic to $L_2(2)$ contribute $O(X^{1/3})$, while those belonging to $T_4^3$ contribute $O(X^{1/6})$. The former dominate as $X \to \infty$, as do the proportions of Kodaira-N\'eron symbols from those graphs. 
\end{proof}

\begin{exm}
In Pari/GP \cite{pari} we count the number of minimal elliptic curves of $j$-invariant 1728 up to height $10^{18}$ and their respective Kodaira-N\'eron symbols as follows:
\begin{center}
\begin{tabular}{|lr|}
\hline
Symbol & Number \\
\hline
$\III$ & 620846 \\
$\II$ &  310424 \\
$\I_2^*$ & 77607 \\
$\I_3^*$ & 77607 \\
$\III^*$ & 77610\\
\hline
\end{tabular}
\end{center}
These agree with the asymptotics of Theorem \ref{j1728_thm}.
\end{exm}

\subsection{$j$-invariant 0} \label{j0-proportion-section}

Now we determine the proportions of Kodaira-N\'eron symbols at $p=3$ for elliptic curves with $j$-invariant 0 and various torsion subgroups.  We prove Theorems 
 \ref{j=0_tors=2}, \ref{j=0_tors=3}, and \ref{j=0_tors=0} in succession. 

\begin{proof}[Proof of Theorem \ref{j=0_tors=2}]
An elliptic curve with $j$-invariant 0 has torsion subgroup $\Z/2\Z$ if and only if $E$ can be written as $y^2 = x^3 + t^3$ with $t \ne 1$.  Applying Proposition \ref{j0_min_prop}, $E$ is minimal if and only if $t$ is squarefree. 
Therefore, the number of minimal elliptic curves with torsion subgroup $\Z/2\Z$ of height $\leq X$ is obtained by estimating the number of squarefree integers $t$ such that $27t^6 \leq X$; this is
\[
\frac{2}{\zeta(2)} \cdot \left(\frac{X}{27} \right)^{1/6} + O(X^{1/12})= \frac{2}{\sqrt{3}\zeta(2)} X^{1/6} + O(X^{1/12}).
\]
By Table \ref{j0_Z2_torsion}, we have $\typ_3(E) = \III$ if $v_3(t) = 0$, and $\typ_3(E) = \III^*$ if $v_3(t) = 1$.  By Lemma \ref{squarefree}, these valuations occur 3/4 of the time and 1/4 of the time, respectively, among all squarefree integers.  
\end{proof}

\begin{proof}[Proof of Theorem \ref{j=0_tors=3}]
An elliptic curve with $j$-invariant 0 has torsion subgroup $\Z/3\Z$ if and only if $E$ can be written as $y^2 = x^3 + t^2$ with $t \ne 1$. Observe that if $v_2(t^2) = 4$, then it is not possible for $t^2/16 \equiv 3 \pmod{4}$, since odd squares are congruent to 1 modulo 4.  Therefore, by Proposition \ref{j0_min_prop}, $E$ is minimal if and only if $t$ is cubefree and $v_2(t) \in \lbrace 0,1 \rbrace$.   By Lemma \ref{squarefree} the proportion of such integers is $6/7\zeta(3)$. Counting by height, the number of minimal elliptic curves of height $\leq X$ with $j$-invariant 0 and torsion subgroup $\Z/3\Z$ is therefore
\[
\frac{6}{7\zeta(3)} \left(\frac{X}{27}\right)^{1/4} + O(X^{1/12}) = \left( \frac{2\cdot3^{1/4}}{7\zeta(3)} \right) X^{1/4} + O(X^{1/12}).
\]
The proportions of the various Kodaira-N\'eron types follow directly from the congruences modulo 27 of Table \ref{j0_Z3_torsion}.  Therefore, 
\[
N_{\Z/3\Z,\T}^0(X) = c_{\T} \cdot \left( \frac{2\cdot3^{1/4}}{7\zeta(3)} \right) X^{1/4} + O(X^{1/12}),
\]
where 
\[
c_{\T} = 
\begin{cases} 
6/13 & \text{ if $\T = \II$}, \\
3/13 & \text{ if $\T = \III$ or $\IV$, and}  \\
1/13 & \text{ if $\T = \IV^*$.}
\end{cases}
\]
\end{proof}

\begin{proof}[Proof of Theorem \ref{j=0_tors=0}]
An elliptic curve with $j$-invariant 0 has trivial torsion subgroup if and only if $E$ can be written as $y^2 = x^3 + t$ with $t$ neither a square nor a cube.  The number of such $t$ such that $27t^2 \leq X$ is $\sim (X/27)^{1/2} - (X/27)^{1/4} - (X/27)^{1/3} + (X/27)^{1/6}$.  The conditions on $t$ for minimality (6th-power-free, except that if $v_2(t) = 4$, then $t/16 \equiv 3 \pmod{16}$), are sieved by multiplying by $31/32\zeta(6)$, where we replace the Euler factor at 2 in $\zeta(6)$ (64/63) by 64/62.  Therefore there are 
\[
\frac{62}{63 \sqrt{27}\zeta(6)} X^{1/2} + O(X^{1/4}) 
\]
minimal elliptic curves of $j$-invariant 0 and trivial torsion of height $\leq X$.  The proportions with which each Kodaira-N\'eron type occurs follows immediately from working with congruences modulo $3^6$ using the criteria of Table \ref{j0_trivial_torsion}.  We record these proportions as
\[
N_{\lbrace \infty \rbrace ,\T}^0(X) = c_{\T} \cdot \frac{62}{63\sqrt{27}\zeta(6)} X^{1/2} + O(X^{1/4}),
\]
where 
\[
c_{\T} = 
\begin{cases} 
243/364 & \text{ if $\T = \II$} \\
81/364& \text{ if $\T = \III$} \\
27/364& \text{ if $\T = \IV$} \\
9/364& \text{ if $\T = \IV^*$} \\
3/364& \text{ if $\T = \III^*$, and} \\
1/364& \text{ if $\T = \II^*$}.
\end{cases}
\]
\end{proof}

We conclude by performing some calculations in Pari/GP as a verification of Theorem \ref{j=0_tors=0}.  For $1 \leq t \leq 192450$, the elliptic curves 
\[
y^2 = x^3 + t
\]
have height $<10^{12}$.   For these values of $t$ we compute the exact number $N_{\lbrace \infty \rbrace,\T}^0(10^{12})$ of elliptic curves that are minimal, have trivial torsion, $j$-invariant 0, and Kodaira-N\'eron type $\T$.  We then compare this to the function
\[
f(X,\T) = c_{\T} \cdot \frac{62\sqrt{X}}{63\sqrt{27}\zeta(6)}
\]
at $X = 10^{12}$.   We find that the calculations match within the predicted error estimate.

\begin{center}
\begin{tabular}{|l|r|r|}
\hline
$\T$ & $N_{\lbrace \infty \rbrace,\T}^0(10^{12})$ & $f(10^{12},\T)$ \\
\hline
$\II$  & 124138 & $\sim 124281.5$ \\
$\III$ & 41331& $\sim 41427.2$ \\
$\IV$ & 13736 & $\sim 13809.2$  \\
$\IV^*$ & 4579& $\sim 4603.1$  \\
$\III^*$ &  1525& $\sim 1534.3$ \\
$\IV^*$ &512 & $\sim 511.4$ \\
\hline
\end{tabular}
\end{center}

\vfill

\section{Tables}

\subsection{Isogeny-Torsion Graphs} \label{iso_tor_subsection}

\begin{center}
\begin{table}[h]
\begin{tabular}{|cc|}
\hline
$T_4^1:$&
${ \xymatrix{
& [2] \atop j=66^3 & \\
[4] \atop j=66^3 \ar@{-}[r]&[2,2] \atop j=1728 \ar@{-}[u] \ar@{-}[r]& [4] \atop j =1728
}}$ \\
\hline
$T_4^2:$ &${ \xymatrix{
& [2] \atop j=66^3 & \\
[4] \atop j=66^3 \ar@{-}[r]&[2,2] \atop j=1728 \ar@{-}[u] \ar@{-}[r]& [2] \atop j =1728
}}$ \\
\hline
$T_4^3:$ & ${ \xymatrix{
& [2] \atop j=66^3 & \\
[2] \atop j=66^3 \ar@{-}[r]&[2,2] \atop j=1728 \ar@{-}[u] \ar@{-}[r]& [2] \atop j =1728
}}$ \\
\hline
$L_2(2):$ & ${ \xymatrix{
[2] \atop j=1728 \ar@{-}[r]& [2] \atop j=1728 }}$ \\
\hline
\end{tabular}
 \label{1728_iso_tor_graphs}
\caption{Isogeny Torsion Graphs for $j=1728$}
\end{table}
\end{center}

\vfill

\newpage

\subsection{Kodaira-N\'eron Types for $j$-invariant 0} 

\begin{center}
\begin{table}[h] 
\begin{tabular}{|cl|}
\hline
$E$ & $\typ_3(E)$ \\
\hline
\href{https://beta.lmfdb.org/EllipticCurve/Q/27/a/3}{27.a3} & $\IV^*$  \\
\href{https://beta.lmfdb.org/EllipticCurve/Q/27/a/4}{27.a4} & $\II$ \\
\href{https://beta.lmfdb.org/EllipticCurve/Q/36/a/3}{36.a3} & $\III^*$ \\
\href{https://beta.lmfdb.org/EllipticCurve/Q/36/a/4}{36.a4} & $\III$ \\
\hline
\end{tabular}
\caption{$\typ_3(E)$ for \href{https://beta.lmfdb.org/EllipticCurve/Q/27/a/3}{27.a3},
\href{https://beta.lmfdb.org/EllipticCurve/Q/27/a/4}{27.a4}, 
\href{https://beta.lmfdb.org/EllipticCurve/Q/36/a/3}{36.a3}, 
\href{https://beta.lmfdb.org/EllipticCurve/Q/36/a/4}{36.a4}} \label{j0_finite_tables}
\end{table} 
\end{center}

\begin{center}
 \begin{table}[h]
 \begin{tabular}{|l|l|} 
 \hline
$\typ_3(E)$ & Conditions \\
 \hline
$\II$ & $b=t^2$,  $v_3(t)=0$, and $t\equiv \pm 2, \pm 4 \pmod{9}$ \\
$\III$ & $b=t^2$, $v_3(t)=0$, and $t\equiv \pm 1 \pmod{9}$  \\
$\IV$ & $b=t^2$, $v_3(t)=1$ \\
$\IV^*$ & $b=t^2$, $v_3(t)=2$  \\
 \hline
\end{tabular}
 \caption{$j$-invariant 0 and $E(\Q)_\tors \simeq \Z/3\Z$}
 \label{j0_Z3_torsion}
 \end{table}
 \end{center}

\begin{center}
 \begin{table}[h]
 \begin{tabular}{|l|l|}
 \hline
$\typ_3(E)$ & Conditions \\
 \hline
 $\III$ & $b$ is a cube, $v_3(b)=0$ \\
 $\III^*$ & $b$ is a cube, $v_3(b) =3$\\
 \hline
\end{tabular}
 \caption{$j$-invariant 0 and $E(\Q)_\tors \simeq \Z/2\Z$}  \label{j0_Z2_torsion}
 \end{table}
 \end{center}
 
\begin{center}
\begin{table}[h]
 \begin{tabular}{|l|l|}
 \hline
$\typ_3(E)$ & Conditions \\
 \hline
 $\II$ & $b \equiv \pm 2, \pm 3,\pm 4 \pmod{9}$\\
 $\II^*$ &$v_3(b) = 5$\\
 $\III$ & $b\equiv \pm 1\pmod{9}$ \\
 $\III^*$ & $v_3(b) = 3$, $b/27 \equiv \pm 1 \pmod{9}$ \\
 $\IV$ & $v_3(b)=2$\\
 $\IV^*$ & $v_3(b)=4$, or\\
 &$v_3(b)=3$ and $b/27 \equiv \pm 2,\pm4 \pmod{9}$ \\
 \hline
\end{tabular}
 \caption{$j$-invariant 0 and $E(\Q)_\tors \simeq \lbrace \infty \rbrace$} \label{j0_trivial_torsion}
 \end{table} 
 \end{center}

\newpage

\subsection{Kodaira-N\'eron Types for $j$-invariant 1728}

\begin{center}
\begin{table}[h] 
\begin{tabular}{|cl|}
\hline
$E$ & $\typ_2(E)$ \\
\hline
\href{https://beta.lmfdb.org/EllipticCurve/Q/32/a/3}{32.a3} & $\III$  \\
\href{https://beta.lmfdb.org/EllipticCurve/Q/34/a/4}{32.a4} & $\I_3^*$ \\
\href{https://beta.lmfdb.org/EllipticCurve/Q/64/a/3}{64.a3} & $\I_2^*$ \\
\href{https://beta.lmfdb.org/EllipticCurve/Q/64/a/4}{64.a4} & $\II$ \\
\hline
\end{tabular}
\caption{$\typ_2(E)$ for \href{https://beta.lmfdb.org/EllipticCurve/Q/32/a/3}{32.a3},
\href{https://beta.lmfdb.org/EllipticCurve/Q/32/a/4}{32.a4}, 
\href{https://beta.lmfdb.org/EllipticCurve/Q/64/a/3}{64.a3}, 
\href{https://beta.lmfdb.org/EllipticCurve/Q/64/a/4}{64.a4}} \label{j1728_finite_tables}
\end{table} 
\end{center}

\begin{center}
 \begin{table}[h]
 \begin{tabular}{|l|l|}
 \hline
$\typ_2(E)$ & Conditions \\
 \hline
 $\I_2^*$ & $v_2(a) = 2$\\
 $\III$ & $v_2(a) = 0$\\ 
  \hline 
\end{tabular}
 \caption{$j$-invariant 1728, $E(\Q)_\tors \simeq \Z/2\Z \times \Z/2\Z$} \label{j1728_(2,2)tors}
 \end{table}
 \end{center}

\begin{center}
\begin{table}[h]
 \begin{tabular}{|l|l|}
 \hline
$\typ_2(E)$ & Conditions \\
 \hline
 $\II$ & $t$ even \\
 $\I_3^*$ & $t$ odd \\
\hline
\end{tabular}
 \caption{$j$-invariant 1728, $E(\Q)_\tors \simeq \Z/2\Z$, Isogeny Class size 4} \label{j1728-Z2tors-size4}
 \end{table}
  \end{center}

\begin{center}
 \begin{table}[h]
 \begin{tabular}{|l|l|}
 \hline
$\typ_2(E)$ & Conditions \\
 \hline
 $\I_2^*$ & $v_2(a)=2$ and $a/4 \equiv 3 \pmod{4}$ \\
$\I_3^*$ & $v_2(a) = 2$ and $a/4 \equiv 1 \pmod{4}$\\
$\II$ & $v_2(a)=0$ and $a \equiv 1 \pmod{4}$\\
$\III$ & $v_2(a)=1$, or \\
 &$v_2(a)=0$ and $a \equiv 3 \pmod{4}$\\
 $\III^*$ & $v_2(a)=3$ \\
\hline
\end{tabular}
 \caption{$j$-invariant 1728, $E(\Q)_\tors \simeq \Z/2\Z$, Isogeny Class size 2} \label{j1728-generic}
 \end{table}
 \end{center}

\end{document}